\newtheorem{thm}{Theorem}
\newtheorem{lem}{Lemma}
\newtheorem{prop}{Proposition}
\newtheorem{cor}{Corollary}
\newtheorem{rem}{Remark}
\begin{document}
\newcommand{\tr}{\,^t\!}
\newcommand{\2}{\frac{\sqrt 2}{2}}
\newcommand{\e}{\mathcal E_{Sp_0(n,F) }(V_k)}

\newtheorem{example}{Example}

\title{On the construction of a finite Siegel space }
\author{ Jos\'{e} Pantoja$^{1,3}$, Jorge Soto Andrade$^{1}$, Jorge A.  Vargas$^{1,2}$}
\thanks {$^{1}$Partially supported by FONDECYT 1120578}
\thanks {$^{2}$Partially supported by  CONICET, SECYTUNC (Argentina) }
\thanks {$^{3}$Partially supported by Pontificia Universidad Cat\'{o}lica de Valpara\'{i}so}
\date{\today }
\keywords{ Finite Siegel half space, star-analogue }
\subjclass[2010]{Primary 22E46; Secondary 17B10}
\address{Instituto de Matem\'{a}ticas, PUCV; }\address{Blanco Viel 596, Co. Bar\'{o}n, Valpara\'{i}so, Chile}
\email{jpantoja@ucv.cl}
\address{Departamento de Matem\'{a}ticas, Fac. de Ciencias, Universidad de Chile; } \address{Las Palmeras 3425 , \~{N}u\~{n}oa, Santiago, Chile}
\email{sotoandrade@u.uchile.cl}
\address{ FAMAF-CIEM; }\address{Ciudad Universitaria, 5000 C\'ordoba, Argentine}
\email{vargas@famaf.unc.edu.ar}



 \begin{abstract}
In this note we construct a finite analogue of classical Siegel's Space. Our approach is to look at it as a non commutative Poincare's half plane. The finite Siegel Space is described as the space of Lagrangians of a $2n$ dimensional space over a quadratic extension $E$ of a finite base field $F$. The orbits of the action of the symplectic group $Sp(n,F)$ on Lagrangians are described as homogeneous spaces. Also,  Siegel's Space is described as the set of anti-involutions of the symplectic group.

  \end{abstract}

\maketitle
\markboth{P-S-V}{Finite Siegel space}

\section{introduction}
Classical Siegel's half space is a clever generalization of Poincar\'{e}'s half plane. In \cite{siegel}, the starting idea is to replace the real base field $\mathbb R$ by the full matrix ring $M(n,\mathbb R)$. Then Siegel's half space consists of all symmetric complex $n \times n $ matrices whose imaginary part is positive definite.

We address here the case of a finite base field. Our approach to obtain the finite analogue of Siegel's half space is to extend the universal (double cover of) Poincar\'e's half plane construction given in \cite{jaja} to the case where the base field $F$ is replaced by a  ring  $A$ with involution denoted *, that we read "star".  A ring with involution is also called involutive ring, as in  \cite{psa, psa2}. Instead of the group
$G_F = SL(2,F)$ we have now its star-analogue $ G_A = SL_*(2,A)$ introduced in \cite{psa2}. A natural $G_A-$ space is the star-plane  $\mathcal P_A$ consisting of all points $x = (x_1, x_2)   \in P = A \times A$ whose coordinates  $x_1 $ and $x_2$  star-commute, i.e.  $x_1x_2^* = x_2x_1^*.$ Notice {\em en passant}  the analogy with Manin's $q-$plane, whose points have coordinates that anti-commute.

We introduce  the canonical star - anti-hermitian form $\omega$ on  $P$ given by
$$    \omega(x,y) =x_1y_2^* - x_2y_1^* $$ for all $x,y \in  P.$ We have then

 $$  \omega(y,x) =   - \omega(x,y)^* $$
 for all $x,y \in  P, $ and we see that the  star-plane  $\mathcal P_A$ consists of all isotropic vectors for  $\omega.$

 We also notice that if we write
 $$ x^*=  \left ( \begin{array}{c}
                        x_1^*  \\
                         x_2^*
                        \end{array} \right ) $$

 \noindent
 for    $ x = (x_1, x_2)  \in \mathcal P_A,$
 then  we  have
 $$\omega(x,y) = xwy^* $$
 where  $$ w =     \left(
     \begin{array}{cc}
       0 & 1 \\
          -1 & 0 \\
         \end{array}
         \right)          $$.

The star-plane $\mathcal P_A$ is stratified by the family of  $G_A-$ subplanes
$\mathcal P_A(J)$ of   $ \mathcal P_A$  given by the condition
$ Ax + Ay = J $ where $J$ is a left ideal in $A.$   In what follows we will be  mainly interested in the generic case  $J=A,$ and we will take $A = M_n(F)$ endowed with the transpose map.

As a motivation for the construction below, recall that  finite Poincar\'e half plane, more precisely the double cover of finite Poincar\'e half plane, may be realized as the set of lines through the origin in the usual plane  $E^2 =E\times E,  E $ a quadratic extension of the base finite  field  $F$,  whose slope does not lie in $F \cup \{ \infty\}. $    Lines through the origin are however just the Lagrangians for the symplectic bilinear form $determinant $ on  $E^2.$ and the constraint that the slope of a Lagrangian $L$ does not lie in $F \cup \{ \infty\}. $  amounts to say that the symplectic form $h$  given by Galois twisting of the determinant,  given by   $$ h(x,y) = x_1\bar y_2 - x_2 \bar y_1
   $$

\noindent
for    $ x =   \left ( \begin{array}{c} x_1 \\ x_2  \end{array} \right ), y = \left ( \begin{array}{c} y_1 \\ y_2  \end{array} \right )$ in $E,$
 is {\em non degenerate} when restricted to  $L.$

 Indeed, if the constraint on   $L$ is fulfilled, we may take a representative  vector of the form  $(z,1) \in L \;\; (z \in E),$ so that  $L = \{ (zx_2], x_2) | x_2 \in E \}$ and  then  $h$
 on $L$ is given by
$$ h( \left ( \begin{array}{c} zx_2 \\ x_2  \end{array} \right ),  \left ( \begin{array}{c} zy_2 \\ y_2  \end{array} \right )) =  (z - \bar z)x_2  \bar y_2,
   $$
so $h$ non degenerate means just  $ z \neq \bar z.$

Under the action of $SL(2,F)$ in the set $\mathcal L$ of all Lagrangians we have then the generic orbit consisting of all Lagrangians   on which $h$  is non degenerate and the residual orbit consisting of all Lagrangians   on which  $h$ is degenerate, i. e. null in this case, so that $ z = \bar z $, i.e. $ z \in F$.
More generally we will see below that the rank of  the restriction  $h_L$ of $h$ to $L$ characterizes the   $SL(2,F)-$ orbits in  $\mathcal L.$


\section{Preliminaries}

\subsection{General setup}
We assume now that  the involutive ring   $(A,*)$ is a quadratic Galois extension of a sub involutive ring $A_0$, i.e. that  the Galois group  $\Gamma = Gal(A, A_0)$ is of order 2 and that  $A_0 = Fix_A(\Gamma).$ We denote   $a \mapsto \bar a $ the nontrivial element $\tau$ of $\Gamma.$ Notice that $\tau$ extends naturally to the plane $A^2 = A \times A$  and to the star-plane  $\mathcal P_A,$   Our data is then  $(\mathcal P_A, \omega, \tau)$.

We introduce the canonical    star-$\tau$-antihermitian form $h$ on $P$ given by
$$     h(x,y) = \omega(x,\bar y) $$
for all $x,y \in  P. $  We have
$$   h(y,x) =  - \overline {h(x,y)}^*  $$
for all $x,y \in  P. $

\subsection{The full matrix ring case}
We specialize now to the case where the involutive ring $(A, *)$ is the full matrix ring $M_n(E)$ over a finite field $E$ endowed with the transpose mapping. We assume moreover that  $E$ is a quadratic extension of a subfield $F$ with Galois group $\{Id, \tau\}.$

We have  the big special linear group $G_E = SL_*(2, A) $ and the small special linear group
  $G_F = SL_*(2, A_0)$ that appears as the fixed point set of $\tau$ in $G_E$.
 The set of  all  lines through the   origin in the plane  $ \mathcal P_A$        is     denoted  by $\mathcal L_A$    It follows from classical  Witt's theorem    that $G_E$ acts transitively on $\mathcal L_A.$

 Indeed the non-commutative 1- dimensional subspaces $L \in \mathcal P_A$ may be readily identified with classical Lagrangians in the symplectic space $V =  E^{2n}, $ endowed with the canonical symplectic form $\omega',$ that in terms of the
   canonical basis  $e_1, \cdots, e_{2n}$ for $V$ is given by   $\omega (e_j, e_{n+j})=-\omega(e_{n+j}, e_j)=1, j=1, \dots, n$ and $\omega(e_k, e_s)=0 $ for $\vert k-s \vert \not= n.$

 Recall \cite{psa} that  Lagrangian subspaces  $L$  in $V$ may be described as
 $ L = L_{(a,b)} = \langle   aP + bQ  \rangle \;\;\;  (a, b \in A, aA+bA = A,  ab^* = ba^*) $
 \noindent
 where  the column vectors  $P$ and $Q$ are given by  $ P = (e_1, \cdots, e_{n})^*,    Q = (e_{n+1}, \cdots, e_{2n})^* $ and
 $\langle u \rangle $ stands for the vector subspace of $V$ spanned by the components   $ u_1,...,u_n$  of any $u \in  M = V^n.$

 Moreover
$L_{(a,b)} = L_{(a',b')} $   if and only if   $ a' = ca $  and  $ b' = cb$  for a suitable $ c  \in A.$
So classical Lagrangians correspond to non commutative lines through the origin in $\mathcal P_A.$

On the other hand  regarding the action of  $G_E$ we have  $ g(L_{(a,b)} )=L_{(a,b)g}g$ for 	 $ g \in G_E$.

  The set of classical Lagrangian subspaces for $\omega'$ in $V$ will be denoted  by   $\mathcal L_{V} $  or just  $\mathcal L.$

In what follows we will switch to  the classical setting for Lagrangians in $V$ for the case of  $ A = M_n(E)$.
\bigskip

 We denote the set of symmetric matrices with coefficients in $E^n$ by $Sym(E^n).$ The isotropy subgroup for the subspace $L_+$ spanned by $e_1, \dots, e_n$ is  the semidirect product of the subgroups
$$ K:=\{ \left( \begin{array}{cc}
  A & 0 \\
  0 & ^t\!A^{-1} \\
   \end{array}
   \right), A \in GL_n(E) \}$$
$$P^+=\{ \left(
     \begin{array}{cc}
       I & B \\
         0& I \\
         \end{array}
         \right),   B \in Sym(E^n)\}$$

On the other hand,  the isotropy subgroup for the subspace $L_-$ spanned by the vectors $e_{n+1}, \dots, e_{2n}$ is the semidirect product of $K$ times the subgroup
$$P^-=\{ \left(
     \begin{array}{cc}
       I & 0 \\
          B & I \\
         \end{array}
         \right),  B \in Sym(E^n)\}$$
         Let $\mathcal L : Sym(E^n) \rightarrow \mathcal L_{E,2n}$ be the Siegel map defined by the formula $$\mathcal L(Z)= \{\left ( \begin{array}{c}
                          Zx \\
                          x
                        \end{array} \right ) , x   \in E^n \}$$
We would like to point out that in  [8], a complete description of the Lagrangian subspaces of
$E^{2n}$, $E$   a finite field, is given, in the study of the groups $SL_{*}(2,A)$ (applied to $A=M_n(E)$ and $*$ the transposition of matrices).  The Siegel Lagrangian $\mathcal L(Z)$ is $L_{Z,I_{n}}$ in the notation of [8].\\

Following Siegel,  we write sometimes   $(A,B,C,D)$ for the $2n\times 2n$ matrix $$\left(
     \begin{array}{cc}
       A & B \\
          C & D \\
         \end{array}
         \right) \,\, A,B,C,D \in M_{n}(E )$$
  \begin{rem}        Whenever $F=\mathbb R,$  we have that $\mathcal L(Z)$ is equal  to the action on  the subspace $L_-$ of the exponential of the Lie algebra element $ (0,Z,0,0) \in \mathfrak{sp}(n, \mathbb C).$ \\  \noindent
  \end{rem}
 We define $\epsilon=\epsilon_F$  by

          $$\epsilon_F = \left\{ \begin{array}{ll}
      1 & \mbox{if $-1$ is an square in $F$} \\
        -1 & \mbox{otherwise} \\
       \end{array} \right.$$
   This is just the Lagrange symbol $(\frac{-1}{p})$ in the case of  a finite field $F$ of characteristic $ p.$

(We note that, in the real case, we always have $\epsilon_F=-1$)

\begin{prop}
We have the decomposition into $G_F-$ invariant subsets
$$  \mathcal L_V = \bigcup_{0\leq r \leq n} \mathcal H_r, $$
where $\mathcal H_r$ stands for the set of all $W \in \mathcal L_V$ such that the rank of $h_E$ restricted to $W\times W$ is $r.$
\end{prop}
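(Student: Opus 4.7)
The plan is to verify two separate statements: (i) the sets $\mathcal H_r$ partition $\mathcal L_V$ as $r$ ranges over $\{0,1,\dots,n\}$, and (ii) each $\mathcal H_r$ is stable under the action of $G_F$. The first is essentially tautological: for any $W\in \mathcal L_V$ the restriction $h|_{W\times W}$ has a well-defined rank, and since $\dim_E W = n$ this rank lies in $\{0,1,\dots,n\}$; the sets $\mathcal H_r$ are by definition the level sets of this function, hence disjoint, and their union is all of $\mathcal L_V$. So the content of the statement lies entirely in (ii).

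For the $G_F$-invariance, the key point to exploit is that $G_F$ sits inside $G_E=Sp(n,E)$ as precisely the fixed points of the Galois involution $\tau$, i.e.\ the matrices with entries in $F$. Thus for any $g\in G_F$ one has $\bar g=g$, and since $g$ also preserves the symplectic form $\omega'$, one computes for $x,y\in V$
\[
h(gx,gy) \;=\; \omega'(gx,\overline{gy}) \;=\; \omega'(gx,\bar g\,\bar y) \;=\; \omega'(gx,g\bar y) \;=\; \omega'(x,\bar y) \;=\; h(x,y).
\]
Hence $h$ is $G_F$-invariant. Picking an $E$-basis $\{w_1,\dots,w_n\}$ of $W$, the set $\{gw_1,\dots,gw_n\}$ is an $E$-basis of $gW$, and the Gram matrices $\bigl(h(w_i,w_j)\bigr)$ and $\bigl(h(gw_i,gw_j)\bigr)$ coincide. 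In particular they have equal rank, so $gW\in\mathcal H_r$ whenever $W\in\mathcal H_r$.

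Combining (i) and (ii) yields the stated decomposition. The proof is really just a matter of unwinding definitions; the only subtlety to flag is the interplay between the conditions ``$g$ lies in the symplectic group'' and ``$g$ commutes with $\tau$'', both of which must be invoked to pass $g$ across $h$. There is no substantive obstacle, and one may also observe in passing that the sets $\mathcal H_r$ need not all be non-empty a priori; non-emptiness and the finer orbit structure inside each $\mathcal H_r$ are separate questions, not part of this statement.
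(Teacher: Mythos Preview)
Your proof is correct and follows the same line as the paper: the paper does not give a separate argument for this proposition, but in the proof of Lemma~2 it records that $Sp(n,F)=U(E^{2n},h_E)\cap Sp(n,E)$ and concludes that each $\mathcal H_j$ is $Sp(n,F)$-invariant, which is exactly your computation showing that $g\in G_F$ preserves $h$ because it preserves $\omega'$ and commutes with the Galois automorphism. The partition statement is, as you say, tautological.
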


 Next, we consider the hermitian form $$h_0 : E^{2n} \times E^{2n} \rightarrow E$$ defined so that the canonical basis is an orthogonal basis for  $h_0,$  $h_0(e_j,e_j)= -1$ for $1 \leq j\leq n$ and $h_0(e_j,e_j)=1 $ for $ n+1 \leq j \leq 2n.$

 We consider the group  $$Sp_0(n,F):= U(E^{2n},h_0)\cap Sp(n,E).$$  Later on, for a finite field $F$ we  construct a generalized Cayley transform, that is, we show there exists an element $C$ in $Sp(n,E)$ which conjugates  $Sp(n,F)$ into  $Sp_0(n,F).$ That is, $C^{-1} Sp_0(n,F)C =Sp(n,F).$
Thus, we verify that $Sp_0(n,F)$ is isomorphic to $Sp(n,F), ($a well known result for $F=\mathbb R$, see page 242 of \cite{jsatesis}).\\

  Among the objectives of this note  are, for a finite field $F$,  to determine the orbits   of both groups $Sp(n,F), Sp_0(n,F)$ in $\mathcal L_{E,2n}$ and the intersection of each orbit with the image of the Siegel map.  When $F=\mathbb R, E=\mathbb C$ this problem has been considered and solved by \cite{wolf2}, \cite{kaneyuki} and references therein.\\

\smallskip
 It is known that for a finite field $E$ and a hermitian form $(W,h)$ on a finite dimensional vector space $W$ over $E,$ there always exists an ordered  basis $w_1, \dots $ of $W$ and a nonnegative integer $r$ so that $h(w_k, w_s)=\delta_{ks} $ for $k,s\leq r$ and $h(w_k, w_s)=0 $ for $k >r$ or $s>r.$

 In this situation we define the type of  the form $(W,h)$  to be $r. $

\smallskip

Let $ \mathcal O_r$ the set of Lagrangian subspaces $W \in \mathcal L_{E,2n} $ so that the form $h_0$ restricted to $W$ is of type $r.$ Obviously $Sp_0(n,F)$ leaves invariant the subset $ \mathcal O_r$ and $\mathcal L_{E,2n}=\mathcal O_n \cup  \mathcal O_{n-1}\cup \cdots \cup  \mathcal O_0. $  \\
One of the main results of this work is:

\begin{thm} Assume $F$ is a finite field, then
\begin{itemize}
                                 \item The orbits of $Sp_0(n,F)$ in $\mathcal L_{E,2n} $ are exactly the sets $\mathcal O_j, j=0, \cdots, n.$
                                 \item The orbits of $Sp(n,F)$ in $\mathcal L_{E,2n} $ are exactly the sets $\mathcal H_j, j=0, \cdots, n.$
                                 \item Any orbit of either $Sp(n,F)$ or $Sp_0(n,F)$ intersects the image of the Siegel map.
                                 \item Except for $n=1$,  no orbit of $Sp_{0}(n,F)$ is contained in the image of the Siegel map.
                                 \item $\mathcal H_n$ is the unique orbit of $Sp(n,F)$ contained in the image of the Siegel map.
                                 \item $  C \mathcal H_j = C \mathcal O_j.$
                               \end{itemize}


\end{thm}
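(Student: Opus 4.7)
\medskip

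\noindent\textbf{Proof strategy.} The six assertions split naturally into three clusters: the two orbit descriptions, the three statements about the Siegel image, and the Cayley identity. I address each cluster in turn.

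For the orbit descriptions, the inclusions of each $Sp_0(n,F)$-orbit in some $\mathcal{O}_j$, and of each $Sp(n,F)$-orbit in some $\mathcal{H}_j$, are immediate from invariance: $Sp_0(n,F)$ preserves $h_0$ by definition, while for $g\in Sp(n,F)$ the equality $\bar g = g$ yields
\[h_E(gx,gy)=\omega(gx,\overline{gy})=\omega(gx,g\bar y)=\omega(x,\bar y)=h_E(x,y),\]
so the rank of $h_E|_W$ is preserved. For the reverse inclusion (transitivity) my plan is a Witt-style extension argument: given two Lagrangians with the same invariant $r$, first construct a linear isometry between them compatible with both $\omega$ and the relevant hermitian form (possible because two Lagrangians with equal $h$-invariant are abstractly isomorphic as sesquilinear spaces), then extend it to $V=E^{2n}$ by the classical Witt theorem. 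The main obstacle is arranging the extension inside the smaller $F$-rational subgroups $Sp(n,F)=Sp(n,E)^{\tau}$ and $Sp_0(n,F)=Sp(n,E)\cap U(h_0)$; I would handle this by working block-wise on the hermitian orthogonal complement of a fixed model Lagrangian and invoking transitivity of the ambient Galois-fixed (resp.\ unitary) group on the nondegenerate complementary piece.

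For the three Siegel-image assertions I compute both hermitian forms on the Siegel Lagrangian $\mathcal{L}(Z)$. A direct calculation gives
\[h_E\bigl((Zx,x),(Zy,y)\bigr) = \tr{x}\,(Z-\bar Z)\,\bar y,\]
so the $h_E$-rank of $\mathcal{L}(Z)$ equals $\mathrm{rank}(Z-\bar Z)$; the analogous calculation expresses the $h_0$-type of $\mathcal{L}(Z)$ as the rank of an explicit hermitian matrix in $Z$ and $\bar Z$. As $Z$ ranges over $\mathrm{Sym}(E^n)$ both invariants attain every value in $\{0,\dots,n\}$, showing that every orbit meets $\mathrm{Im}(\mathcal{L})$. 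For the non-containment statements (with $n\geq 2$), I would exhibit in each $\mathcal{O}_j$ a Lagrangian meeting $L_+$ nontrivially---built using the surjectivity of the norm map $E^\times\to F^\times$ to produce $h_0$-isotropic vectors already inside $L_+$---which rules out $\mathcal{O}_j\subseteq\mathrm{Im}(\mathcal{L})$; the same kind of witness handles $\mathcal{H}_j$ for $j<n$. Finally, the containment $\mathcal{H}_n \subseteq \mathrm{Im}(\mathcal{L})$ is to be verified directly from the rank formula for $h_E|_W$.

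For the Cayley identity, the key input is that the intertwiner $C$ coming from the Cayley transform construction satisfies $h_0(Cu,Cv) = \lambda\, h_E(u,v)$ for a nonzero constant $\lambda$; equivalently, conjugation by $C$ sends $U(h_E)$ onto $U(h_0)$, which is precisely why $C^{-1}Sp_0(n,F)C=Sp(n,F)$. It follows that for every Lagrangian $W$ the type of $h_0|_{CW}$ equals the rank of $h_E|_W$, so $C$ carries $\mathcal{H}_j$ onto $\mathcal{O}_j$, from which the stated identity $C\mathcal{H}_j = C\mathcal{O}_j$ is a direct consequence.
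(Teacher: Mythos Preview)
Your outline has the right overall shape, but the transitivity argument contains a genuine gap, and your route to $\mathcal H_n\subseteq\mathrm{Im}(\mathcal L)$ does not close.

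\medskip
\noindent\textbf{The Witt extension.} You correctly identify the difficulty: a naive Witt extension of an isometry $W\to W'$ lands in $Sp(n,E)$, not in $Sp(n,F)=Sp(n,E)^{\tau}$ or in $Sp_0(n,F)=Sp(n,E)\cap U(h_0)$. Your proposed fix (``work block-wise on the hermitian orthogonal complement'') does not address this, because $W$ is an $E$-subspace with no evident $F$-structure, and there is no off-the-shelf simultaneous Witt theorem for the pair $(\omega,h_E)$ or $(\omega,h_0)$. The paper supplies the missing idea: for $W\in\mathcal H_r$ one has $W\cap\overline W=W^{\perp_{h_E}}$ of dimension $n-r$, and both $W\cap\overline W$ and $W+\overline W$ are $\tau$-stable, hence defined over $F$. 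One then passes to the $2r$-dimensional symplectic quotient $(W+\overline W)/(W\cap\overline W)$, applies induction there, lifts, and only then invokes Witt \emph{over $F$} to land in $Sp(n,F)$. The base case $r=n$ is handled separately (and the $\mathcal O_j$ case is obtained from the $\mathcal H_j$ case via the Cayley transform, not by an independent Witt argument). Without this $F$-rational descent your plan does not produce elements of the smaller groups.

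\medskip
\noindent\textbf{The inclusion $\mathcal H_n\subseteq\mathrm{Im}(\mathcal L)$.} Your rank formula $h_E|_{\mathcal L(Z)}\leftrightarrow\mathrm{rank}(Z-\bar Z)$ only applies to Lagrangians already of Siegel form, so it cannot by itself show that an arbitrary $W\in\mathcal H_n$ lies in the image. Nor can you use transitivity here, since $\mathrm{Im}(\mathcal L)$ is not $Sp(n,F)$-stable (for instance $J\cdot\mathcal L(0)=L_+\notin\mathrm{Im}(\mathcal L)$). The paper gives a direct argument: write $W=\{(Rx,Sx)\}$, normalize $S$ by row/column permutations to block form $\bigl(\begin{smallmatrix}B_1&0\\B_3&0\end{smallmatrix}\bigr)$ with $B_1$ invertible of size $r=\mathrm{rank}\,S$, and then compute that the matrix of $h_E|_W$ has its last $n-r$ columns zero. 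Full rank of $h_E|_W$ forces $r=n$, i.e.\ $S$ invertible, whence $W=\mathcal L(RS^{-1})$. Some such rank argument on the second block is unavoidable.

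\medskip
The remaining parts of your sketch (every orbit meets $\mathrm{Im}(\mathcal L)$; explicit witnesses outside $\mathrm{Im}(\mathcal L)$ for $n\ge 2$; the conformal relation $h_0(Cu,Cv)=\lambda\,h_E(u,v)$ giving $C(\mathcal H_j)=\mathcal O_j$) are in line with the paper's approach.
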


\section{Proofs}

In order to write down the proof of  theorem 1 we need to set up some notation and  recall some known facts.

   $\tr A $ denotes the transpose of the matrix $A.$ Vectors $v$ in $E^k $ are column vectors, so that we write   $\tr v$ for the  row vector corresponding to $v$

   In particular, we will use
  $$ E^{2n}\ni v = \left ( \begin{array}{c}
                          x \\
                          y
                        \end{array} \right ) , x , y \in E^n,    E^{2n}\ni w = \left ( \begin{array}{c}
                          r \\
                          s
                        \end{array} \right ) , r , s \in E^n,$$
\noindent
Let $I_n$ denote the $n\times n$ identity matrix and $0$ denotes the zero matrix. We set $$J:= \left( \begin{array}{cc}
  0 & I_n \\
  -I_n & 0 \\
   \end{array}
   \right).$$
Hence,  $\omega (v,w)= \tr x s - \tr y r= \tr v J w $.
Thus,  $$(A,B,C,D)=\left( \begin{array}{cc}
  A & B \\
  C & D \\
   \end{array}
   \right), A,B,C,D \in M_n(E)$$ belongs to $Sp(n,E)$ if and only if $$ \tr A C= ^t\!C A ,  \,\, \tr D B= \tr B D,  \,\, \tr A D- \tr C B=I_n.$$
Let $G_n(E^{2n})$ denote   the Grassmanian   of the $n-$dimensional subspaces of $E^{2n}.$ Hence, any of the  the groups $Sp(n,E), Sp(n,F), Sp_0(n,F)$ acts on $G_n(E^{2n})$ by $T W=T(W).$

A $n-$dimensional linear subspace $W$ of $(V,\omega)$  is a Lagrangian subspace  if and only if for every $v,w \in W, \tr v J w =0$ if and only if $ \tr x s-\tr y r=0$ for every $v,  w \in W.$ We fix $R,S \in E^{n\times n}$ and consider the subspace  $W=\{ \left ( \begin{smallmatrix} R x \\ Sx \end{smallmatrix} \right ), x \in E^n \}.$ Then, $W$ is Lagrangian if and only if $^t\!R S - \tr S R=0$ and the matrix $\left ( \begin{smallmatrix} R  \\ S  \end{smallmatrix} \right )$ has rank $n.$  Actually, any Lagrangian subspace may be written as in the previous example (see also [8]).  Particular   examples of Lagrangian subspaces are $L_+, L_-, \mathcal L(Z),$  $( Z \in Sym(E^n)).$ Needles to say,  the image of $\mathcal L$ is equal to the orbit $L_-$ under the subgroup $P^+,$  hence, Bruhat's decomposition yields
 that the  image of $\mathcal L$ is "open and dense" in $\mathcal L_{E, 2n}. $ Let $p : E^{2n} \rightarrow E^n$ denotes projection onto the second component. That is,  $p \left ( \begin{smallmatrix} x \\ y \end{smallmatrix} \right )=y .$ It easily follows
 that:

\noindent
A subspace $W \in G_n(E^{2n})$ belongs to the image  of $\mathcal L$ if and only if $W$ is Lagrangian and $p(W)$ is equal to $E^n.$ We  are ready for,
\begin{lem} Let $G$ be either $Sp(n,F)$ or $Sp_0(n,F)$ and fix $Z \in Sym(E^n)$. Then the  orbit  $G \mathcal L(Z)$ is contained in the image of $\mathcal L$ if and only if for every $(A,B,C,D) \in G$ the matrix $(CZ+D)$ is invertible.
\end{lem}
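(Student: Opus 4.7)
The plan is straightforward: compute the action of an arbitrary group element on $\mathcal{L}(Z)$ explicitly, then apply the characterization of the image of $\mathcal{L}$ given just above, namely that a subspace $W \in G_n(E^{2n})$ lies in $\mathcal{L}(\mathrm{Sym}(E^n))$ if and only if $W$ is Lagrangian and $p(W) = E^n$.

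First I would write, for $g = (A,B,C,D) \in G \subset Sp(n,E)$ and a generic vector $v = \bigl(\begin{smallmatrix} Zx \\ x \end{smallmatrix}\bigr) \in \mathcal{L}(Z)$, the direct block-matrix product
\[
g \cdot v \;=\; \left(\begin{array}{c} (AZ+B)x \\ (CZ+D)x \end{array}\right),
\]
so that $g \cdot \mathcal{L}(Z) = \bigl\{\bigl(\begin{smallmatrix} (AZ+B)x \\ (CZ+D)x \end{smallmatrix}\bigr) : x \in E^n\bigr\}$.

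Next I would use that $G \subseteq Sp(n,E)$ automatically preserves Lagrangians, so $g \cdot \mathcal{L}(Z)$ is a Lagrangian for every $g \in G$. By the criterion recalled just before the lemma, membership of $g \cdot \mathcal{L}(Z)$ in the image of $\mathcal{L}$ is then equivalent to the single condition $p(g \cdot \mathcal{L}(Z)) = E^n$. But
\[
p(g \cdot \mathcal{L}(Z)) \;=\; \{(CZ+D)x : x \in E^n\} \;=\; (CZ+D)(E^n),
\]
which equals $E^n$ if and only if the $n \times n$ matrix $CZ+D$ is surjective, hence invertible (since it is square).

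Quantifying over $g \in G$ gives the claimed equivalence: $G \mathcal{L}(Z) \subseteq \mathrm{Im}(\mathcal{L})$ iff $CZ+D$ is invertible for every $(A,B,C,D) \in G$. There is essentially no obstacle here; the lemma is a direct translation of the projection criterion into the block-matrix language of the Siegel parametrization, and no property of $G$ beyond $G \subseteq Sp(n,E)$ is used.
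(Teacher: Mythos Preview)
Your proof is correct and follows essentially the same approach as the paper: both compute $g\cdot\mathcal L(Z)=\{\bigl(\begin{smallmatrix}(AZ+B)x\\(CZ+D)x\end{smallmatrix}\bigr):x\in E^n\}$ and then reduce the question to whether $p(g\cdot\mathcal L(Z))=(CZ+D)(E^n)$ is all of $E^n$. The only cosmetic difference is that for the ``if'' direction the paper explicitly writes down $Z_1=(AZ+B)(CZ+D)^{-1}$ rather than invoking the projection criterion abstractly.
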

 {\it Proof:}   The subspace   $ (A,B,C,D) \mathcal L(Z)= \{ \left (\begin{smallmatrix} (AZ+B)x \\ (CZ+D)x \end{smallmatrix} \right ) , x \in E^n \}$ \\ is $n-$dimensional, Lagrangian  and  its image under $p$  is equal to the image of $CZ+D.$ Hence,  if $(CZ+D)$ is invertible, by a change of variable we have that $ (A,B,C,D) \mathcal L(Z) $ is equal to $\mathcal L(Z_1)$ for  $Z_1= (AZ+B)(CZ+D)^{-1}.$   Conversely, if the orbit $G \mathcal L(Z)$ is contained in the image of the Siegel map,   for each  $g=(A,B,C,D) \in G $ there exists $ Z_g \in Sym(E^{n}) $ so that $$\{ \left ( \begin{smallmatrix} (AZ+B)x \\ (CZ+D)x \end{smallmatrix}  \right ), x \in E^{n} \} = \{ \left ( \begin{smallmatrix} Z_g x \\ x \end{smallmatrix} \right ), x \in E^{n} \}.$$ Thus, the image of $CZ+D$ is equal to $E^n.$   \begin{flushright} $\Box $ \end{flushright}

\begin{cor} $(A,B,C,D) \mathcal L(Z) $ belongs to the image of $\mathcal L$ if and only if $(CZ+D)$ is an invertible matrix.
\end{cor}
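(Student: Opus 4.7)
The plan is to observe that the corollary is simply the single-element instance of the lemma; the proof reuses verbatim the computation already carried out in the lemma's proof, without the universal quantifier over a group $G$.

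First, I would recall the characterization established just before the lemma: a Lagrangian subspace $W \in G_n(E^{2n})$ lies in the image of $\mathcal L$ if and only if the projection $p(W) = E^n$. Applying the symplectic transformation $g = (A,B,C,D)$ to $\mathcal L(Z)$ gives
$$g\,\mathcal L(Z) = \left\{ \left(\begin{smallmatrix}(AZ+B)x \\ (CZ+D)x\end{smallmatrix}\right) : x \in E^n\right\},$$
which is automatically $n$-dimensional and Lagrangian (since $g \in Sp(n,E)$ preserves both dimension and the symplectic form). Its image under $p$ is precisely the image of the matrix $CZ+D$.

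Thus the criterion reduces to: $p(g\,\mathcal L(Z)) = E^n$ if and only if $CZ + D$ is surjective, and since $CZ+D$ is a square $n\times n$ matrix, surjectivity is equivalent to invertibility. For the converse direction, if $CZ+D$ is invertible one explicitly realizes $g\,\mathcal L(Z) = \mathcal L(Z_1)$ with $Z_1 = (AZ+B)(CZ+D)^{-1}$, exactly as in the lemma; the verification that $Z_1$ is symmetric uses the relations $\tr AC = \tr CA$, $\tr BD = \tr DB$, $\tr AD - \tr CB = I_n$ satisfied by elements of $Sp(n,E)$.

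There is no real obstacle here: the corollary is literally the lemma with the group $G$ replaced by the cyclic (or trivial) configuration built from one element of $Sp(n,E)$, and the argument in the lemma's proof already treats this pointwise. The only minor point to mention explicitly is that the lemma was stated for $G \in \{Sp(n,F), Sp_0(n,F)\}$ while the corollary allows any $(A,B,C,D) \in Sp(n,E)$; however, the proof of the lemma never used anything beyond membership in $Sp(n,E)$, so the same argument applies.
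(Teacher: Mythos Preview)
Your proposal is correct and matches the paper's approach exactly: the corollary is stated in the paper without separate proof, being an immediate consequence of the pointwise computation already carried out in the proof of Lemma~1, namely that $p\big((A,B,C,D)\mathcal L(Z)\big)$ equals the image of $CZ+D$. Your remark that the argument only uses membership in $Sp(n,E)$, not in the smaller groups, is a valid and useful observation.
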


\smallskip



\begin{example}
Orbits of $Sp_0(1,F)$ in the space of Lagrangians $\mathcal L_{E,2}.$ We assume $F$ is a finite field.  
 Let $N(e)=e \bar e$ be the norm of the extension $E/F.$ The hypothesis on $F$  implies  $N$ is a surjective map onto $F.$      After a computation, we obtain that $Sp_0(1,F)$ is the set of matrices
$$ \{\begin{pmatrix}
 \alpha & \beta \\
   \bar \beta &   \bar \alpha \\
   \end{pmatrix} : \alpha , \beta \in E, \alpha \bar \alpha -\beta \bar \beta=1 \}.$$

 In this case $\mathcal L_{E,2}=G_1(E^2),$ a typical element of $G_1(E^2)$ is  denoted by $E \left ( \begin{smallmatrix}
a \\
b\\
\end{smallmatrix} \right )$ with $a\not= 0$ or $b \not=0.$ Since $h_0 (\left ( \begin{smallmatrix}
z \\
1\\
\end{smallmatrix} \right ),\left ( \begin{smallmatrix}
w \\
1\\
\end{smallmatrix} \right ))= 1-z\bar w,$ it readily follows:
\begin{center}
$\mathcal O_1 =\{ E \left ( \begin{smallmatrix}
z \\
1\\
\end{smallmatrix} \right ), z \in E, N(z)\not= 1 \} \cup \{ L_+ \},$  \\ $\mathcal O_0  =\{ E \left ( \begin{smallmatrix}
z \\
1 \\
\end{smallmatrix} \right ), z \in E, N(z)= 1\}$
\end{center}
For $z$ so that $ N(z) \not= 1$  we have $ (1 -z\bar z)^{-1}=t  \bar t, t \in E.$
If we define the matrix $$A:= \begin{pmatrix}
 \bar t & zt \\
  \bar z \bar t &   t  \\
   \end{pmatrix} $$   then    $A \left ( \begin{smallmatrix}
0 \\
1\\
\end{smallmatrix} \right ) = \left ( \begin{smallmatrix}
 zt \\
 t \\
\end{smallmatrix} \right ) .$ Obviously $A \in Sp_0(1,F).$  We are left to transform $E\left ( \begin{smallmatrix}
0 \\
1\\
\end{smallmatrix} \right )$ into $E\left ( \begin{smallmatrix}
1 \\
0\\
\end{smallmatrix} \right ).$  For this, we fix $z \not=0$ such that $N(z^{-1})\not= 1.$ Then, by means of $A$ the line $E\left ( \begin{smallmatrix}
1 \\
0\\
\end{smallmatrix} \right )$
  is transformed into
the   line $E \left ( \begin{smallmatrix}
1 \\
\bar z  \\
\end{smallmatrix} \right ),$ which is equal to the line $E\left ( \begin{smallmatrix}
 \bar{z}^{-1} \\
 1 \\
\end{smallmatrix} \right ) .$  From the previous calculation the last line is transformed into the line $E \left ( \begin{smallmatrix}
0 \\
1\\
\end{smallmatrix} \right )  .$             Thus,          $Sp_0(1,F) $ acts transitively in $\mathcal O_1.$

   We now show $Sp_0(1,F)$ acts transitively in  $\mathcal O_0.$

   We fix $E \left ( \begin{smallmatrix}
a \\
1\\
\end{smallmatrix} \right )  $  so that $a \bar a=1.$ Let $E \left ( \begin{smallmatrix}
b \\
1\\
\end{smallmatrix} \right )  $   in $\mathcal O_0.$ Then $N(a)=N(b),$ owing to  theorem 90 of Hilbert we have $\frac{a}{b}=d \bar {d}^{-1}.$ Since, the characteristic of $F$ is different from two, the pair of vectors $ \left ( \begin{smallmatrix}
a \\
1\\
\end{smallmatrix} \right ) ,  \left ( \begin{smallmatrix}
1 \\
 -\bar a\\
\end{smallmatrix} \right )  ,$   as well as $ \left ( \begin{smallmatrix}
b \\
1\\
\end{smallmatrix} \right ) ,  \left ( \begin{smallmatrix}
1 \\
- \bar b\\
\end{smallmatrix} \right )  $  determine two  ordered  basis for $E^2.$ .
Let $T$ be the linear operator defined by
$T( \left ( \begin{smallmatrix}
a \\
1\\
\end{smallmatrix} \right ))=d \left ( \begin{smallmatrix}
b \\
1\\
\end{smallmatrix} \right )$ and $T( \left ( \begin{smallmatrix}
1 \\
- \bar a\\
\end{smallmatrix} \right ))=d^{-1}\left ( \begin{smallmatrix}
1 \\
- \bar b\\
\end{smallmatrix} \right ) $. One checks that $ h_0( T \left ( \begin{smallmatrix}
a \\
1\\
\end{smallmatrix} \right ) ,  T \left ( \begin{smallmatrix}
1 \\
- \bar a\\
\end{smallmatrix} \right )) = h_0( d    \left ( \begin{smallmatrix}
b \\
1\\
\end{smallmatrix} \right ), d^{-1} \left ( \begin{smallmatrix}
1 \\
- \bar b\\
\end{smallmatrix} \right ))$  and that $ \omega( T \left ( \begin{smallmatrix}
a \\
1\\
\end{smallmatrix} \right ) ,  T \left ( \begin{smallmatrix}
1 \\
- \bar a\\
\end{smallmatrix} \right )) = \omega( d    \left ( \begin{smallmatrix}
b \\
1\\
\end{smallmatrix} \right ), d^{-1} \left ( \begin{smallmatrix}
1 \\
-\bar b\\
\end{smallmatrix} \right ))$  to conclude that  $T$ lies in $U(E^2, h_0) \cap Sp(E^2,\omega)=Sp_0(1,F).$   Hence, $\mathcal O_0$ is  an orbit of $Sp_0(1,F).$
\end{example}

\bigskip

\begin{rem}
 The orbit $\mathcal O_0$ is contained in the image of the Siegel map, whereas the orbit $\mathcal O_1$ does contain a point in the complement to the image of the Siegel map. This observation shows that for a finite field $F$ and $n=1$ our conclusions are in concordance with  the results obtained by other   authors for the case of $F=\mathbb R.$ More precisely in the real case, $\mathcal O_1 $ splits  in the union of two orbits, one orbit is the set of lines where $h_0$ is positive definite and the other is the set of lines where $h_0$ is negative definite. In this case the orbit corresponding to the set of lines where $h_0$ is positive definite is contained in the image of the Siegel map, whereas the orbit corresponding to the set of lines where $h_0$ is negative definite is not contained in the image of the Siegel map. The orbit corresponding to the set
of lines where $h_0$ vanishes is contained in the image of the Siegel map.
\end{rem}

\bigskip

\begin{rem}
 The previous  computations together with  corollary 1 to lemma 1, let us conclude that $ \bar \beta z + \bar \alpha$ is nonzero for every element of $Sp_0(1,F)$ such that $- z \bar z +1 =0.$  Whereas, for each $z$ so that $- z \bar z +1 \not= 0,$ there exist an element of $Sp_0(1,F)$ so that $\bar \beta z + \bar \alpha =0,$ it is  the element that carries the line of direction $(z,1)$ onto the line of  infinite slope!
\end{rem}
\bigskip

 We have

 \begin{lem}  $Z$ be an element of  $Sym  (E^n). $ Then $ \mathcal L(Z) $ belongs to $\mathcal H_r$ if and only if the anti-hermitian form on $E^n$ defined by $ Z-\bar Z $   has rank $r.$
\begin{proof} For the non-degenerate anti-hermitian  form $h_E$ on $E^{2n},$ given by $h_E (v,w):=w(v,  \bar w)= \,^t\!x \bar s - \,^t\!y \bar r $ ($v,w \in E^{2n}),$ we have $Sp(n,F)=U(E^{2n}, h_E) \cap Sp(n,E).$ Hence, $\mathcal H_j$ is invariant under the action of $Sp(n,F).$  It follows that  $$ h_E( \left ( \begin{smallmatrix}
Zx \\
x\\
\end{smallmatrix} \right ), \left ( \begin{smallmatrix}
Zy \\
y\\
\end{smallmatrix} \right ))= \tr x (Z-\bar Z) y \;\;\;\; (x,y \in E^n),  $$ from which the result. \end{proof}
\end{lem}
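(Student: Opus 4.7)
The plan is to work directly from the definition: the Lagrangian $\mathcal L(Z)$ is the image of the linear isomorphism $\phi : E^n \to \mathcal L(Z)$ given by $\phi(x) = \left(\begin{smallmatrix} Zx \\ x \end{smallmatrix}\right)$, so the rank of $h_E$ restricted to $\mathcal L(Z)$ equals the rank of the pulled-back sesquilinear form $\phi^* h_E$ on $E^n$. I would compute this pullback explicitly and verify that its Gram matrix is $Z - \bar Z$; the lemma then reduces to the elementary fact that the rank of a sesquilinear form coincides with the rank of any of its Gram matrices.

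To carry out the computation, I plug $\phi(x)$ and $\phi(y)$ into the formula $h_E(v,w) = \tr v_1\, \bar w_2 - \tr v_2\, \bar w_1$ (for $v = \left(\begin{smallmatrix} v_1 \\ v_2 \end{smallmatrix}\right)$ and $w = \left(\begin{smallmatrix} w_1 \\ w_2 \end{smallmatrix}\right)$) recalled in the lemma's statement. This yields $h_E(\phi(x), \phi(y)) = \tr(Zx)\,\bar y - \tr x\, \overline{Zy}$. The only non-trivial ingredient is the symmetry hypothesis $\tr Z = Z$, which gives $\tr(Zx) = \tr x\, Z$ and therefore
\[
h_E(\phi(x), \phi(y)) = \tr x \,(Z - \bar Z)\, \bar y.
\]

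This is precisely the sesquilinear form on $E^n$ whose Gram matrix in the standard basis is $Z - \bar Z$. A quick sanity check confirms that it is anti-hermitian in the sense inherited from $h_E$: the transpose-conjugate of $Z - \bar Z$ is $\tr{\bar Z} - \tr Z = -(Z - \bar Z)$, using $\tr Z = Z$ once more. Since the rank of a sesquilinear form equals the rank of its Gram matrix, the rank of $h_E$ restricted to $\mathcal L(Z)$ equals $\mathrm{rk}(Z - \bar Z)$, and by the very definition of $\mathcal H_r$ the lemma follows. The main obstacle anticipated is essentially none: the whole argument is a short matrix computation, and the only subtle point worth flagging is the double role of $Z = \tr Z$, first in collapsing the cross terms to produce $Z - \bar Z$ cleanly, and then in verifying that the resulting form is indeed anti-hermitian.
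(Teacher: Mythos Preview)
Your argument is correct and is essentially the same as the paper's: both compute the restriction of $h_E$ to $\mathcal L(Z)$ via the parametrization $x\mapsto \left(\begin{smallmatrix} Zx\\ x\end{smallmatrix}\right)$, use $\tr Z=Z$ to simplify, and read off $Z-\bar Z$ as the Gram matrix. Your write-up is a bit more explicit about why the rank of the restricted form equals $\mathrm{rk}(Z-\bar Z)$ and about the anti-hermitian check, and your formula $\tr x\,(Z-\bar Z)\,\bar y$ is in fact the correct one (the paper's displayed $y$ in place of $\bar y$ is a typo).
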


\bigskip

\begin{example}
 We now compute the orbits of $Sp(1,F)$ in $\mathcal L_{E,2}$ for a finite field $F.$   For this we show   that each $\mathcal H_j$ is an orbit of $Sp(1,F).$ In fact, $$\mathcal H_1= \{ E  \left ( \begin{smallmatrix}
z \\
1\\
\end{smallmatrix} \right ) : z-\bar z \not= 0\} \hskip 0.2cm \text{and}\,\, \mathcal H_0= \{ E  \left ( \begin{smallmatrix}
z \\
1\\
\end{smallmatrix} \right ) : z \in F\} \cup \{E\left ( \begin{smallmatrix}
1 \\
0\\
\end{smallmatrix} \right )\}.$$
Since $ J \in Sp(n,F)$ we have that  $E\left ( \begin{smallmatrix}
1 \\
0\\
\end{smallmatrix} \right )$ is in the orbit of $E\left ( \begin{smallmatrix}
0 \\
1\\
\end{smallmatrix} \right ).$ Since the matrix $(1,t,0,1)\in Sp(1,F)$ and $(1,t,0,1) (0,1)=(t,1)$ we have that $Sp(1,F)$ acts transitively in $\mathcal H_0.$

We now show that $Sp(1,F)$ acts transitively in $\mathcal H_1.$ Let $ E  \left ( \begin{smallmatrix}
z \\
1\\
\end{smallmatrix} \right ), E  \left ( \begin{smallmatrix}
w \\
1\\
\end{smallmatrix} \right )$ so that $z-\bar z \not=0, w -\bar w \not=0,$ Since $F$ is a finite field, there exists $t_0 \in E$ so that $z-\bar z =t_0 \bar t_0 (w -\bar w).$  We define

$$ A:=\frac{1}{z-\bar z} \begin{pmatrix}
 t_0 w -\bar t_0 \bar w & z\bar t_0 \bar w - \bar z t_0 w \\
  t_0 -\bar t_0 &   z\bar t_0 -\bar z t_0\\
   \end{pmatrix}$$  The coefficients of $A$ belong to $F$ and  $$A\left ( \begin{smallmatrix}
z \\
1\\
\end{smallmatrix} \right ) =  \frac{z}{z-\bar z} \begin{pmatrix}
 t_0 w -\bar t_0 \bar w  \\
  t_0 -\bar t_0 \\
   \end{pmatrix} + \frac{1}{z-\bar z} \begin{pmatrix}
 z\bar t_0 \bar w - \bar z t_0 w \\
     z\bar t_0 -\bar z t_0\\
   \end{pmatrix} = t_0 \begin{pmatrix}
  w \\
     1\\
   \end{pmatrix}.$$  $$det A= \frac{(z-\bar z)(w-\bar w)t_0\bar t_0}{(z-\bar z)^2} =1.$$

   We note that $\mathcal H_1$ is contained in the image of the Siegel map, whereas $\mathcal H_0$ is not.
\end{example}

 \bigskip

\smallskip
We consider now

Let $g \in Sp_0(n,F),$ then  $$ g^{-1}=diag(- I_n, I_n)\,\, ^t\!\bar g \, diag(- I_n,I_n)$$  Therefore, the elements of
$Sp_0(n,F)$ are the matrices
 $$ \begin{pmatrix} A  & B \\ \bar B & \bar A \\
 \end{pmatrix} \, A, B \in M_{n}(E), \,\,  \tr \bar A  B = \tr B  \bar A,\,\, \tr A \bar{A}-\,\, \tr \bar B   B=I$$

 Since $Sp_0(n,F)$ is invariant under the map $ g \mapsto \tr g,$ we get the characterization of $Sp_0(n,F)$ obtained by \cite{siegel}, namely,
    \smallskip

    \noindent
    $(R,S,T,V) \in Sp_0(n,F)$ if and only if $$T= \bar S,\,\, V=\bar R, \,\, R\, \tr S= S\, \tr R, \,\,\,\, R\, \tr \bar R -S\,\, \tr \bar S =I_n . \eqno(S)$$
    as is readily seen.
    \medskip

  \smallskip

A simple computation shows:   $$Sp_0(n,F) \cap K P^+ = Sp_0(n,F) \cap K P^- = diag(A, \bar A), A \in
U(n,E).$$
\medskip

\smallskip

Now assuming that  $F$ is a finite field, we prove that any set $\mathcal O_r$ intersects nontrivially the image of
the Siegel map, and for $r>0$,  that $\mathcal O_r$ contains a point of the complement of the image of
the Siegel map.
\smallskip
\medskip

We observe that the form $h_0$ restricted to  $\mathcal L(diag(d_1, \dots, d_n)) $ is the diagonal form $$ (1- d_1 \bar d_1)x_1 \bar y_1 + \cdots + (1- d_n \bar d_n)x_n \bar y_n.$$ Thus,  $F$ being  a finite field,  allow us to find  $d$ so that  $d \bar d =1,$ from which we obtain  that  $\mathcal L(diag(0,\dots,0, d,\dots d))$ ($r$ zeros) belongs to $\mathcal O_r. $
\smallskip
\medskip

We fix now $0<r \leq n$ and $d \in E $ such that $d\bar d=1.$ Let $W_r$ denote the subspace spanned by the vectors $e_1, \dots, e_r, d e_{r+1}+ e_{n+r+1}, \dots,  d e_n +e_{2n}.$ Then $W_r$ is $n-$dimensional and isotropic for $\omega.$ The matrix of the form $h_0$ restricted to $W_r$, on the above basis,  is $diag(-1, \dots, -1, 0, \dots, 0),$ (here $-1$ occurs $r$-times). Hence, $W_r $ belongs to $\mathcal O_r.$ Moreover, the dimension of  $p(W_r)$ ($p$ as defined before lemma 1) is  $n-r <n.$ Therefore $W_r$ does not belong to the image of the Siegel map.

\bigskip

\noindent
\begin{rem}  For any permutation matrix $T$ we have that the matrix $\begin{pmatrix}  \tr T^{-1}  & 0 \\ 0 & T\\
 \end{pmatrix}   $ belongs to $Sp(n,F)_0 \cap Sp(n,F).$
\end{rem}
For the time being we assume  $-1$ is not a square in $F.$
\medskip
\smallskip

 We now show that  for odd $n>1$ ,   $\mathcal O_0$  contains points in the image of the Siegel map, and contain points in the complement of the image of the Siegel map.

  To begin with, we consider  $n=3.$

We fix $d, c \in E$ so that $ 0= 1+c\bar c  + d \bar d$ and $c\bar d \in F.$

We set $$ A:= \begin{pmatrix}  1 &  0 & -c \\ 0  & 1 & -d \\   \bar c  & \bar d & 1 \\
 \end{pmatrix} \,\,\, B:= \begin{pmatrix}  1 & 0 & 0 \\ 0 & 1 & 0\\ c & d & 0\\
 \end{pmatrix}     $$

Then, $$\tr A B = \begin{pmatrix}  1+c\bar c &  \bar c d & 0 \\ c\bar d & 1+d\bar d & 0 \\   0  & 0 & 0 \\
 \end{pmatrix}, \,\, \tr B A = \begin{pmatrix}  1+c\bar c &   c \bar d & 0 \\ \bar c d & 1+d\bar d & 0 \\   0  & 0 & 0 \\
 \end{pmatrix} $$ Given that $\bar c d \in F,$  both matrices are equal. Thus, the subspace $L:=\{(Ax,Bx), x \in E^n \}$ is Lagrangian.

 Since

 $$\tr A \bar A = \begin{pmatrix}  1+c\bar c &  \bar c d & 0 \\ c\bar d & 1+d\bar d & 0 \\   0  & 0 & c\bar c +d\bar d+1 \\
 \end{pmatrix}, \,\, \tr B \bar B = \begin{pmatrix}  1+c\bar c &   c \bar d & -c \\ \bar c d & 1+d\bar d & 0 \\   0  & 0 & 0 \\
 \end{pmatrix} $$  both matrices are equal, and therefore $h_0$ restricted to $L$ is the zero form. On the other hand,  $det A=1+d\bar d +c \bar c= det B=0.$ This shows that  $L$ is an element of $\mathcal O_0$ which is not in the image of the Siegel map.
 \smallskip

Now, in order to produce an element of $\mathcal O_0$ in the complement of the image of the Siegel map for odd $n$ with  $n >3$ , we write $n=3+n-3.$ Then the subspace $L\oplus E( e_4 +e_{n+4}) \oplus \dots \oplus E( e_n + e_{2n}) $ satisfies our requirement.
\smallskip

Finally, the subspace $\mathcal L(I_n),$ is an element of $\mathcal O_0$ which is in the image of the Siegel map.

\bigskip


\smallskip

 For $n$ even, $\mathcal O_0$ contains points in the complement of image of the Siegel map.

Let us take $c,b \in E$ such that $ b \bar b=-1.$ We set  $$ A:= \begin{pmatrix}  -bc  & -b \\ c & 1\\
 \end{pmatrix} \hskip 1.0cm  B:=\begin{pmatrix}  1  & 0 \\ b & 0\\
 \end{pmatrix}    $$ Then $\tr A B= \tr B A=(0,0,0,0)$

 Thus, $W :=\{(Ax,Bx), x \in E^2 \}$ is a Lagrangian subspace.

 Given that $ \tr A \bar A =\tr B \bar B =(0,0,0,0),$ we see that, $h_0$ restricted to $W$ is the null form, that is, $W \in \mathcal O_0.$

 Further, neither $A$ nor $B$ is invertible, and so $W$ is not in the image of the Siegel map.

  For $n=2k,$ it readily follows that the subspace $W \oplus \dots \oplus W_2$ ( $k-$times) belongs to $\mathcal O_0$ and it does not belong to the image of the Siegel map.

\bigskip


\smallskip

We  compute now an  example of  points in   $\mathcal O_n$ which are  outside the image of the Siegel map, and also compute an element in $Sp_0(n,F)$ which carries these points into the image of the Siegel map.

 For this, we notice that $(\alpha I_n, \beta I_n,  \bar \beta I_n, \bar \alpha I_n)$ belong to $ Sp_0(n, F)$ if and only if $\alpha \bar \alpha - \beta \bar \beta=1.$

  We fix  an integer $k$  so that $1 <k<n$. The subspace $Z_k$  spanned by $e_1, \dots, e_k, e_{n+k+1}, \dots, e_n$ is Lagrangian, and $h_0$ is non degenerate on it. Obviously, $Z_k$ does not belong to the image of the Siegel map.
  We may  choose nonzero $\alpha, \beta $ so that $\alpha \bar \alpha - \beta \bar \beta=1.$ Then  $(\alpha I_n, \beta I_n,  \bar \beta, \bar \alpha)$ takes $Z_k$ into a subspace which belongs to the image of the Siegel map.

\bigskip
We will use bellow the following involution: for a matrix $A$, $A^\star=^t\! \bar A$
\begin{lem}  $Sp_0(n,F)$ acts transitively on $\mathcal O_n.$
\end{lem}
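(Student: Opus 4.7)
The plan is to decompose $E^{2n}$ canonically as a direct sum of $W$ and a Lagrangian complement, and then to extend an abstract hermitian isometry $W_1\to W_2$ to an element of $Sp_0(n,F)$. The crux is a single identity linking $\omega$ and $h_0$. Setting $\Pi := \left(\begin{smallmatrix} 0 & I_n \\ I_n & 0 \end{smallmatrix}\right)$, a direct matrix computation gives $J\Pi = \operatorname{diag}(-I_n, I_n)$, which is the matrix of $h_0$; hence
\[
h_0(v,w) \;=\; -\omega(v,\,\Pi\bar w) \qquad (v,w\in E^{2n}).
\]
Two consequences drop out: $\Pi$ is anti-symplectic and therefore sends Lagrangians to Lagrangians; and for every Lagrangian $W$ the $h_0$-orthogonal $W^{\perp_{h_0}}$ coincides with $\Pi\bar W$, itself a Lagrangian. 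For $W\in\mathcal O_n$ non-degeneracy of $h_0|_W$ forces $W\cap\Pi\bar W=0$, so $E^{2n}=W\oplus\Pi\bar W$ is simultaneously $h_0$-orthogonal and a sum of two Lagrangians, and $h_0|_{\Pi\bar W}$ is non-degenerate of rank $n$ (being the complement of $W$ in the non-degenerate space $(E^{2n},h_0)$).

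Given $W_1,W_2\in\mathcal O_n$, the classification of hermitian forms over the finite field $E$ (two non-degenerate hermitian forms of the same rank are equivalent) supplies an $E$-linear hermitian isometry $\phi_1:W_1\to W_2$. I then define $\phi_2:\Pi\bar{W_1}\to\Pi\bar{W_2}$ by $\phi_2(\Pi\bar u):=\Pi\overline{\phi_1(u)}$ for $u\in W_1$. The two antilinearities in $u\mapsto\Pi\bar u$ cancel, so $\phi_2$ is $E$-linear; and the formula $h_0(\Pi\bar a,\Pi\bar b)=-\overline{h_0(a,b)}$ (obtained from $\,^t\Pi K\Pi=-K$ with $K=\operatorname{diag}(-I_n,I_n)$) combined with the hermitian property of $h_0$ and the isometry of $\phi_1$ makes $\phi_2$ a hermitian isometry. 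Let $g:=\phi_1\oplus\phi_2$ with respect to $E^{2n}=W_i\oplus\Pi\bar{W_i}$.

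It remains to verify $g\in Sp_0(n,F)$. Preservation of $h_0$ is automatic from $h_0$-orthogonality of the decomposition and the isometry of each factor. Preservation of $\omega$ reduces, because each of the four summands is Lagrangian, to the single cross-term identity
\[
\omega\bigl(\phi_1(x),\,\Pi\overline{\phi_1(u)}\bigr) \;=\; -h_0(\phi_1(x),\phi_1(u)) \;=\; -h_0(x,u) \;=\; \omega(x,\Pi\bar u),
\]
which combines the key identity with the fact that $\phi_1$ preserves $h_0$. Hence $g\in Sp(n,E)\cap U(E^{2n},h_0)=Sp_0(n,F)$, and by construction $gW_1=\phi_1(W_1)=W_2$.

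The main obstacle is to spot the identity $h_0(v,w)=-\omega(v,\Pi\bar w)$ and to realize that the forced coordination $\phi_2(\Pi\bar u)=\Pi\overline{\phi_1(u)}$ is exactly what makes the block map simultaneously symplectic and unitary. Without that coordination, an arbitrary direct-sum hermitian isometry would lie in $U(E^{2n},h_0)$ but have no reason to preserve $\omega$; with it, the symplectic check collapses to the hermitian isometry property of $\phi_1$, and the remaining work is the standard finite-field classification of hermitian forms.
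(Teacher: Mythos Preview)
Your proof is correct and takes a genuinely different route from the paper's. One cosmetic slip: $J\Pi=\operatorname{diag}(I_n,-I_n)$, which is the \emph{negative} of the matrix of $h_0$, not the matrix itself; however your displayed identity $h_0(v,w)=-\omega(v,\Pi\bar w)$ is right, so nothing downstream is affected.

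The paper argues in two explicit matrix steps: first it shows every $\mathcal L(Z)\in\mathcal O_n$ is carried to $L_-$ by the element $(A,-AZ,\overline{-AZ},\bar A)$ where $A(I_n-Z\bar Z)\,^t\!\bar A=I_n$; then, for a general $W=\{(Rx,Sx)\}\in\mathcal O_n$, it builds $g=(-AR^\star,AS^\star,\overline{AS^\star},\overline{-AR^\star})$ (with $A(-R^\star R+S^\star S)A^\star=I_n$) so that $gW$ has invertible first block, and a further swap by $(0,dI_n,\bar dI_n,0)$ lands in the image of the Siegel map. Everything is thus funneled through the base point $L_-$ via explicit coordinates. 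Your argument instead exploits the structural identity linking $\omega$ and $h_0$ through $\Pi$ to produce, for each $W\in\mathcal O_n$, the canonical Lagrangian $h_0$-complement $\Pi\bar W$, and then transports any hermitian isometry $\phi_1:W_1\to W_2$ to an element of $Sp_0(n,F)$ by the forced companion $\phi_2(\Pi\bar u)=\Pi\overline{\phi_1(u)}$. The paper's approach is more hands-on and yields concrete matrices (and, as a by-product, the fact that every $\mathcal O_n$-element is $Sp_0$-equivalent to something in the Siegel image, used elsewhere in the paper). Your approach is coordinate-free, shows transparently why the symplectic and unitary conditions become a single condition once the complement is chosen correctly, and would go through verbatim over any field for which non-degenerate hermitian forms of a given rank are unique up to isometry.
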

\begin{proof}
We have that $L_-=\mathcal L(0)$ is an element of $\mathcal O_n.$  First, we will prove that given $\mathcal L(Z) \in \mathcal O_n $,  there is an element of $Sp_0(n,F)$ which
carries $\mathcal L(Z)$ onto $L_-.$

The matrix of the form $h_0$ restricted to $\mathcal L(Z)$  is $I_n
- Z \bar Z$. Choosing an adequate basis, there exists an
invertible  matrix $A$ so that  $A (I_n - Z \bar Z)\, ^t\! \bar A=  I_n.$
Let set $B:=-AZ.$ Then, since $$A\,\, ^t\!(-AZ)=-AZ\, ^t\!A, \,\text{and}\,  A\,\, ^t\!\bar A -(-AZ)(-\, ^t\!(\bar{AZ})=A (I_n -Z \bar Z)\, ^t\! \bar A=  I_n$$ the
matrix  $(A,B,  \bar B, \bar A)$  belongs to $Sp_0(n,F)$ (it satisfies (S)).

On the other hand,  $$(A,B,C,D)\mathcal L(Z)=\{ \left ( \begin{smallmatrix}
(AZ+(-AZ))x \\
( \bar B Z+\bar A)x\\
\end{smallmatrix} \right ), x\in E^n \}=\{ \left ( \begin{smallmatrix}
0 \\
 \bar A (I_{n}- \bar Z Z) x\\
\end{smallmatrix} \right ), x\in E^n\},$$ By above, the matrix $  \bar A (I_{n}- \bar Z Z)$ is invertible, so that $\mathcal L(Z)$ belongs to the orbit of $L_-.$

\smallskip

\noindent
Next, we will show that if $W=\{ \left ( \begin{smallmatrix}Rx \\ Sx \end{smallmatrix} \right): x \in E^n \} \in \mathcal O_n,$ then there exists an element $g$ in $Sp_0(n,F)$ so that
$g W \in Image(\mathcal L).$

In fact, we will show there exists $g \in Sp_0(n,F)$  so that $g W= \{ (Cx,Dx):\, x \in E^n \}$ with $C$ invertible, and then by means of a matrix $(0,dI_n, \bar d I_n, 0)$ we will transform $gW$ into an element of the image of the Siegel map.

Since $ W $ is in $ \mathcal O_n, $ there exists an invertible matrix $A$ such that $$ A (- R^\star R + S^\star S) A^\star = I_n.$$ Let us consider  $g=(- A R^\star, AS^\star,  \bar{AS^\star}, \bar{-A R^\star}). $ Then $$g W= \{ ((A^{\star})^{-1}x, ( \bar{AS^\star} R- \bar{AR^\star}S) x) x \in E^n \}.$$ Since   $$ - A R^\star (- A R^\star)^\star - AS^\star (AS^\star)^\star = A (-R^\star R + S^\star S) A^\star = I_n$$
   Also $ ^t\!R S= \, ^t\!S R,$ (because $W$ is a Lagrangian subspace), hence  we have $- A R^\star \,\, ^t\!(AS^\star) = -A R^\star \bar S \, ^t\!A= - A \, \bar{^t\!S} \bar{R} \, ^t\!A= AS^\star \, \, ^t\!(AR^\star),$ and so the matrix $g$ belongs to $Sp_0(n,F)$This concludes the proof that $\mathcal O_n$ is the orbit of $L_-$ under the group $Sp_0(n,F).$
\end{proof}
\bigskip

\begin{prop} There exists  element $C $ in $Sp(n,E)$ so that $C^{-1}$ conjugates $Sp_0(n,F)$ onto $Sp(n,F).$
\end{prop}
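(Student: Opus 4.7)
The plan is to produce an explicit element $C\in Sp(n,E)$, modelled on the classical Cayley transform, and then to verify the conjugation property by a short direct calculation. Under the standing hypothesis that $-1$ is not a square in $F$, the quadratic extension $E$ contains a square root $i$ of $-1$; since $i\notin F$, we have $\bar i=-i$. Fix also $s\in E$ with $s^2=\tfrac12$, which exists because every nonzero element of $F$ is a square in $E$ when $|F|$ is odd. My candidate is
$$
C \;:=\; s\begin{pmatrix} I_n & iI_n \\ iI_n & I_n \end{pmatrix}.
$$

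First I would verify $C\in Sp(n,E)$ by the block computation $\tr C\,J\,C=s^2(1-i^2)J=2s^2J=J$. Next I would compute $D:=C\bar C^{-1}$. Because $\bar s^2=\tfrac12$ we have $\bar s=\pm s$, and in either case a short calculation gives $\bar C^{-1}=\pm C$, so $D=\pm C^2$. A direct expansion of the block product produces $C^2=\left(\begin{smallmatrix}0&iI_n\\ iI_n&0\end{smallmatrix}\right)=-iH_0J$, hence $D=\lambda H_0J$ with $\lambda\in\{\pm i\}$. In particular $\lambda^2=-1$ and $\lambda\bar\lambda=1$, and
$$
DH_0J\;=\;\lambda(H_0J)^2\;=\;\lambda I
$$
is a scalar matrix.

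The heart of the proof is one identity. For $g\in Sp_0(n,F)$, combining the symplectic relation $\tr g^{-1}=-JgJ$ with the $h_0$-unitarity $\bar g=H_0\,\tr g^{-1}H_0$ coming from the paper's description of $Sp_0$ yields the clean form $\bar g=-H_0JgJH_0$. Using $DH_0J=\lambda I$, $JH_0=-H_0J$ and $(H_0J)^2=I$, I would check
$$
D\bar g D^{-1}\;=\;-(DH_0J)\,g\,(JH_0D^{-1})\;=\;-(\lambda I)\,g\,(-\lambda^{-1}I)\;=\;g,
$$
which rearranges, via $D=C\bar C^{-1}$, to $\overline{C^{-1}gC}=C^{-1}gC$, i.e.\ $C^{-1}gC\in Sp(n,F)$. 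This gives the inclusion $C^{-1}Sp_0(n,F)C\subseteq Sp(n,F)$; since $|Sp_0(n,F)|=|Sp(n,F)|$ (both are $F$-forms of the connected algebraic group $Sp_{2n}$, whose $F$-point counts agree by Lang--Steinberg, or by a direct order count), the inclusion is an equality.

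The main obstacle I anticipate is engineering the Ansatz so that $D:=C\bar C^{-1}$ lands in the one-dimensional $E$-subspace $E\cdot H_0J$ of $M_{2n}(E)$; once this reduction is in place the conjugation identity follows by formal manipulation. The hypothesis $\epsilon_F=-1$ enters precisely at the point where the scalar $\lambda\in E$ must simultaneously satisfy $\lambda^2=-1$ and $\lambda\bar\lambda=1$, i.e.\ must be a genuinely imaginary square root of $-1$; this is available only when $\sqrt{-1}\notin F$.
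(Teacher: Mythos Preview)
Your Cayley-type matrix is essentially the paper's: the paper takes $C_n=\frac{1}{\sqrt{-2}}\left(\begin{smallmatrix} iI_n&I_n\\I_n&iI_n\end{smallmatrix}\right)$, which differs from yours only by a scalar and by swapping the roles of $1$ and $i$ in the blocks. Your verification is organised differently, however. The paper checks the single identity $h_0(Cv,Cw)=\mu\, h_E(v,w)$ for a nonzero scalar $\mu$, which immediately yields $C\,Sp(n,F)\,C^{-1}\subseteq Sp_0(n,F)$ and the reverse inclusion by the same computation read backwards; you instead compute $D=C\bar C^{-1}$, reduce to $DH_0J=\lambda I$, and verify Galois-invariance of $C^{-1}gC$ via the clean identity $\bar g=-H_0JgJH_0$ together with $(H_0J)^2=I$. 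Both routes are short and correct. Your appeal to Lang--Steinberg for equality is fine but not needed: the same manipulation with $D^{-1}$ in place of $D$ gives $C\,Sp(n,F)\,C^{-1}\subseteq Sp_0(n,F)$ directly.

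There is, however, a genuine gap. The proposition is stated and proved in the paper for \emph{all} finite fields $F$ of odd characteristic, not only those with $-1\notin(F^\times)^2$; the phrase ``for the time being we assume $-1$ is not a square'' a few paragraphs earlier governs only the intervening examples, and the paper's own proof explicitly splits into two cases. Your argument uses in an essential way that $E$ contains a square root $i$ of $-1$ with $\bar i=-i$. When $-1$ is already a square in $F$ one has $\bar i=i$, your $D=C\bar C^{-1}$ collapses to $\pm I$, and the key step $D\bar gD^{-1}=g$ becomes the false assertion $\bar g=g$ for $g\in Sp_0(n,F)$. For this second case the paper builds a genuinely different matrix from a choice of $v\in E$ with $v\bar v=-1$ and $b\in E^\times$ with $b+\bar b=0$, namely
\[
C_n=\frac{1}{\sqrt{b(v^2-1)}}\begin{pmatrix} vI_n&bI_n\\ I_n&vbI_n\end{pmatrix},
\]
and again verifies that it intertwines $h_0$ and $h_E$ up to scalar. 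You need to supply an analogous construction (or adopt the paper's) to cover the case $\epsilon_F=1$ and complete the proof.
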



\begin{proof} When $-1$ is not an square in $F$ the proof follows quite close to the real case. We fix $i \in E $ a square root for $-1.$  We consider the $2n\times 2n$ matrix  $$ C_n :=\frac{1}{\sqrt{-2}} \begin{pmatrix} i I_n  & I_n \\ I_n & i I_n \\
 \end{pmatrix}$$ It readily follows that the matrix $C_n \in Sp(n,E).$ Let $\tau_F$ denote $-1$ if $-2$ is not a square in $F$ and $1$ if $-2$ is a square in $F.$  We now verify the equality $$\tau_F i h_E (v,w)= h_0 (C_n v, C_n w) \,\, \text{for \,every} \, v,w \in E^{2n}.$$ For this, we note that $$ ^t\!C_n diag(-I_n, I_n) \bar C_n = \tau_F i J, \hskip 1.0cm \bar C_n J^{-1} \,^t\!C_n = i\tau_F diag(-I_n, I_n)$$ Indeed,    $$^t\!C_n diag(-I_n, I_n) \bar C_n= \frac{1}{-2 \tau_F } \left ( \begin{smallmatrix}
-i & 1 \\
-1 & i\\
\end{smallmatrix} \right ) \left ( \begin{smallmatrix}
-i & 1 \\
1 &-i\\
\end{smallmatrix} \right ) = \frac{-2i}{-2 \tau_F } J. $$
$$ \bar C_n J^{-1} \,^t\!C_n = \frac{1}{-2\tau_F}  \left( \begin{smallmatrix}
-i & 1 \\
1 &-i\\
\end{smallmatrix} \right )  \left ( \begin{smallmatrix}
-1 & -i \\
i & 1\\
\end{smallmatrix} \right ) =i\tau_F diag(-I_n, I_n).$$ Hence, for $g \in Sp(n,F)$ we have  $$ \,^t\!(C_n g C_n^{-1}) diag(-I_n, I_n) \overline{CgC^{-1}} = \tau i \, \,^t\!C^{-1} \, \,^t\!g J g \bar C^{-1}$$ $$  =\tau i (\tau i\,\, diag(-I_n, I_n))^{-1} = diag(-I_n, I_n).$$ Thus, $CgC^{-1} \in Sp_0(n,F).$  Owing to the equalities of above we deduce, $h_0(v,w)=h_0(C g C^{-1} v, Cgc^{-1} w)= \tau_F i h_E(v,w).$ Tracing back the computation, we arrive to $C^{-1}gC \in Sp(n,F)$ for $g \in Sp_0(n,F).$ Hence, we have proved the proposition when $-1 \notin F .$  In case  $-1 \in F$ we follow the proof in \cite{jsatesis}. We choose $ v \in E $ so that $N(v)=-1, b \in E^\times   : b+\bar b=0.$ We define  $$ C_n := \frac{1}{\sqrt{b(v^2-1)}}  \begin{pmatrix} v I_n  & b I_n \\ I_n & vb I_n \\
 \end{pmatrix}$$ Then, $ C_n \in Sp(n,E) $ and $ (b(v^2-1)) \tr \bar C_n C_n =(v+\bar v) b J.$  A similar computation gives $h_0( Cv, Cw)= -(v+\bar v) b\, h_E(v,w).$
\end{proof}

 \begin{cor}  The group $Sp(n,F)$ acts transitively on $\mathcal H_n.$

\end{cor}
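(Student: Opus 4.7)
The plan is to transport the transitivity of $Sp_0(n,F)$ on $\mathcal O_n$, already established in Lemma 3, to a transitivity statement for $Sp(n,F)$ on $\mathcal H_n$ via the Cayley-type element $C$ provided by Proposition 2.

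First I would verify that the map $W \mapsto CW$ sends $\mathcal H_n$ bijectively onto $\mathcal O_n$. Since $C \in Sp(n,E)$, this map preserves Lagrangians. Moreover, the computation carried out inside the proof of Proposition 2 yields an identity of the form $h_0(Cv, Cw) = \mu \cdot h_E(v,w)$ for all $v, w \in E^{2n}$, where $\mu$ is the explicit nonzero scalar $\tau_F i$ or $-(v+\bar v) b$ (depending on whether $-1$ is a square in $F$). Rescaling a sesquilinear form by a nonzero constant does not affect the rank of any of its restrictions, so for each Lagrangian $W$ the rank of $h_0|_{CW \times CW}$ equals the rank of $h_E|_{W \times W}$. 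Hence $C(\mathcal H_r) = \mathcal O_r$ for every $r$, and in particular $C$ restricts to a bijection $\mathcal H_n \to \mathcal O_n$.

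Now take two subspaces $W_1, W_2 \in \mathcal H_n$ and set $W_j' := C W_j \in \mathcal O_n$. By Lemma 3 there exists $g \in Sp_0(n,F)$ with $g W_1' = W_2'$. Using the conjugation relation $Sp(n,F) = C^{-1} Sp_0(n,F) C$ from Proposition 2, the element $h := C^{-1} g C$ belongs to $Sp(n,F)$, and
\[
h W_1 \;=\; C^{-1} g (C W_1) \;=\; C^{-1} g W_1' \;=\; C^{-1} W_2' \;=\; W_2.
\]
This proves the corollary.

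The argument has no substantial obstacle; it is essentially a formal transport of structure, and the only point that requires any care is the rank-preservation step, which is immediate once one reads off the scalar intertwiner $h_0(C\cdot,C\cdot) = \mu\, h_E(\cdot,\cdot)$ from the explicit matrix identities established in Proposition 2.
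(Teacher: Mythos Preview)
Your proposal is correct and follows essentially the same line as the paper's own proof: both use the Cayley-type element $C$ from Proposition~2 as a conformal map between $h_E$ and $h_0$ to carry $\mathcal H_n$ onto $\mathcal O_n$, and then transport the transitivity of $Sp_0(n,F)$ on $\mathcal O_n$ (Lemma~3) back to $Sp(n,F)$ via the conjugation $Sp(n,F)=C^{-1}Sp_0(n,F)C$. You have simply written out in detail what the paper compresses into a single sentence.
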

\begin{proof}
 Since the groups $Sp(n,F)$ and $Sp_0(n,F)$ are conjugated by the Cayley transform and the Cayley transform is a conformal map for the pair of bilinear forms $h_0, h_E$ the corollary follows 
\end{proof}

\smallskip
\noindent

 \begin{rem}  If $-1 $ is not a square in $F.$
$$C_n^{-1} =-\tau_F \bar C_n.$$
\end{rem}

\smallskip

For a subset  $W$ of $E^{2n},$ we define $\overline W=\{ \bar w , w \in W \}.$ For the linear subspace $W,$  we denote by $r_W$ the rank  of the form $h_E$ restricted to $W.$
\smallskip
\begin{lem}  For a Lagrangian subspace $W$ of $E^{2n}$ we have:  $$ dim (W + \overline W)=n+r_W$$   $$ dim(W \cap \overline W)= n-r_W$$
Furthermore, $ W \cap \overline W= (W + \overline W)^{\perp_{\omega}} =
W^{\perp_{h_E}}.$

\end{lem}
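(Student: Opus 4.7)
The plan is to identify $W\cap\overline W$ first as the radical of $h_E|_W$ and second as $(W+\overline W)^{\perp_\omega}$; the two dimension formulas then drop out by rank--nullity and by inclusion--exclusion.

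I would begin by observing that $\overline W$ is again Lagrangian. Indeed, the matrix $J$ defining $\omega$ has entries in the prime field, so $\omega(\bar v,\bar w)=\overline{\omega(v,w)}$ for all $v,w\in E^{2n}$; applying this to a pair in $\overline W$ and using that $\omega$ vanishes on $W$ gives that it vanishes on $\overline W$. Since $\dim\overline W=\dim W=n$, this forces $\overline W=\overline W^{\perp_{\omega}}$.

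Next I would characterise the radical $\{v\in W:h_E(v,w)=0\ \forall w\in W\}$. By the very definition $h_E(v,w)=\omega(v,\bar w)$, so $v\in W$ lies in this radical exactly when $\omega(v,u)=0$ for every $u\in\overline W$, i.e.\ when $v\in\overline W^{\perp_\omega}=\overline W$. Thus the radical equals $W\cap\overline W$, which is the content of the identification $W^{\perp_{h_E}}=W\cap\overline W$. Rank--nullity applied to $h_E|_W$ then yields $\dim(W\cap\overline W)=n-r_W$, and inclusion--exclusion gives $\dim(W+\overline W)=2n-(n-r_W)=n+r_W$.

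Finally I would handle the remaining identification $W\cap\overline W=(W+\overline W)^{\perp_\omega}$. Using that both $W$ and $\overline W$ are Lagrangian, hence equal to their own $\omega$-perpendiculars, I compute
\[
(W+\overline W)^{\perp_\omega}=W^{\perp_\omega}\cap\overline W^{\perp_\omega}=W\cap\overline W,
\]
which closes the chain. The only genuinely substantive point is the Galois-equivariance of $\omega$ that makes $\overline W$ Lagrangian; once that is in hand the rest is a direct bookkeeping argument, so I do not anticipate a real obstacle.
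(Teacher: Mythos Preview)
Your proposal is correct and follows essentially the same route as the paper: both arguments hinge on the fact that $\overline W$ is again Lagrangian, then use $(W+\overline W)^{\perp_\omega}=W^{\perp_\omega}\cap\overline W^{\perp_\omega}=W\cap\overline W$ together with the identification of the radical of $h_E|_W$ with $W\cap\overline W$ via $h_E(v,w)=\omega(v,\bar w)$. The only difference is presentational: you make the Galois--equivariance of $\omega$ (and hence the Lagrangian property of $\overline W$) explicit and derive the dimension formulas via rank--nullity and inclusion--exclusion, whereas the paper leaves these steps implicit and instead verifies the two inclusions $W\cap\overline W\subseteq W^{\perp_{h_E}}$ and $W^{\perp_{h_E}}\subseteq W\cap\overline W$ by hand.
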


\begin{proof} We use the identities $$Z^{\perp_\omega} \cap U^{\perp_\omega}= (Z+U)^{\perp_\omega}, (Z \cap U)^{\perp_\omega}= Z^{\perp_\omega} +U^{\perp_\omega} .$$ Since $W,\overline W$ are Lagrangian subspaces we have $$ W \cap \overline W =W^{\perp_\omega} \cap \overline W^{\perp_\omega} = (W +\overline W)^{\perp_\omega}.$$  Fix $ y=\bar z \in  W \cap \overline W, z \in W, \text{and} \, x \in W,$ hence $ h_E(x,y)=\omega ( x, \bar y) = \omega (x, z) =0.$ Hence, $y \in W^{\perp_{h_E}}.$ Next, for  $ y\in W^{\perp_{h_E}}, $  we have $ \omega (\bar x, y)=0 $ for every $x \in W.$ The hypothesis $W$ is Lagrangian forces $ \bar y \in W,$ hence $ y=\bar{ \bar y} \in W \cap \overline W.$
\end{proof}

\begin{prop} For a finite field $F$ and $k=0,\dots,n,$ the group  $Sp(n,F)$ acts transitively on $\mathcal H_k.$

\end{prop}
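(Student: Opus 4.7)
The plan is to prove transitivity by a Witt-type reduction that peels off the radical of $h_E|_W$. Given $W, W' \in \mathcal{H}_k$, the preceding Lemma 4 says the subspaces $U := W \cap \overline W$ and $U' := W' \cap \overline W'$ have $E$-dimension $n-k$, are $\tau$-stable (hence descend to $F$-forms $U_0, U_0' \subset F^{2n}$), and are isotropic for $\omega$ (being contained in the Lagrangians $W, W'$). So $U_0, U_0'$ are two $F$-isotropic subspaces of the $2n$-dimensional symplectic $F$-space $(F^{2n}, \omega|_F)$ of the same dimension. By classical Witt's theorem for symplectic forms over $F$, the group $Sp(n,F)$ acts transitively on isotropic $F$-subspaces of a fixed dimension, so there exists $g_1 \in Sp(n,F)$ with $g_1 U_0 = U_0'$, hence $g_1 U = U'$. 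Replacing $W$ by $g_1 W$ we may assume $U = U'$.

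Both $W$ and $W'$ then contain $U$ and lie in $U^{\perp_\omega}$; by Lemma 4 again, $U$ is precisely the radical of $h_E$ restricted to each. The quotient $V := U^{\perp_\omega}/U$ is an $E$-vector space of dimension $2k$ carrying a naturally induced non-degenerate symplectic form $\tilde \omega$, an $F$-form $V_0 := U_0^{\perp_{\omega|_F}}/U_0$, and the corresponding induced anti-hermitian form $\tilde h_E$. The images $\overline W := W/U$ and $\overline{W'} := W'/U$ are $k$-dimensional Lagrangians of $(V,\tilde\omega)$ on which $\tilde h_E$ is non-degenerate, so they lie in the maximal-rank stratum $\mathcal H_k(V)$. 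By Corollary 2 applied to the $2k$-dimensional symplectic $F$-space $V_0$, the group $Sp(V_0, \tilde\omega|_F) \cong Sp(k,F)$ acts transitively on this stratum, yielding $\bar g \in Sp(k,F)$ with $\bar g \overline W = \overline{W'}$.

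The final step is to lift $\bar g$ to an element $g_2 \in Sp(n,F)$ that preserves $U$ and induces $\bar g$ on $U^{\perp_\omega}/U$. Concretely, one fixes an $F$-symplectic basis $e_1, \dots, e_n, f_1, \dots, f_n$ of $F^{2n}$ with $U_0 = \mathrm{span}_F(e_1, \dots, e_{n-k})$; then any element of $Sp(V_0, \tilde\omega|_F)$ extends to a symplectic $F$-transformation by acting on $\mathrm{span}_F(e_{n-k+1}, \dots, e_n, f_{n-k+1}, \dots, f_n)$ as prescribed and fixing $e_1, \dots, e_{n-k}, f_1, \dots, f_{n-k}$. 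Such a lift lies in $Sp(n,F)$, preserves $U$, and by construction $g_2 W = W'$, so $g_2 g_1$ witnesses transitivity.

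The main obstacle I expect is the careful bookkeeping of $F$-structures in the second reduction: one must verify that $U, U^{\perp_\omega}$, and the induced forms $\tilde \omega, \tilde h_E$ all genuinely descend from $F$, so that the induced action really comes from the $F$-Levi factor $Sp(V_0)$ rather than an element of the larger $Sp(k,E)$; once this is spelled out, the lift is immediate from the block form above, and the appeal to Corollary 2 is exactly the base case $n=k$ of the statement being proved.
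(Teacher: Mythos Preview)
Your argument is correct and is essentially the paper's own proof: both reduce to the quotient $(W+\overline W)/(W\cap\overline W)=U^{\perp_\omega}/U$, use transitivity on the top stratum there (the paper phrases this as an induction hypothesis, but only the top-stratum case, i.e.\ Corollary~2, is ever invoked), and then use Witt over $F$ to pass between the quotient and the ambient $F^{2n}$. The only cosmetic difference is the order of operations---you apply Witt first to align the two radicals and lift afterwards by an explicit block-diagonal element, whereas the paper works directly with the two (a priori different) quotients and applies Witt at the end to extend the resulting isometry of $F^{2n}\cap(W+\overline W)$ to all of $F^{2n}$.
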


\begin{proof}  We make the following induction hypothesis: for every $m<n$ and for every $k\leq m$ the group $Sp(m,F)
$ acts transitively on the $\mathcal H_k$ determinate by the corresponding form $h_E$ on  $(E^{2m}, \omega).$

Since, we have already  shown  that $Sp(1,F)$ acts transitively on $\mathcal H_k, k=0,1,$  the first step of the
induction process follows.

We recall also that for $n$ and $k=n$ we have shown that $Sp(n,F)$ acts transitively on
$\mathcal H_n.$
We are left to consider $r<n.$

\bigskip

We fix $W, Y \in \mathcal H_r$ with $r=r_W <n,$ we must find $g \in Sp(n,F)$ so that $g W=Y.$

Since, each of the subspaces $W \cap \overline W, W +\cap \overline W$ are
 invariant under the Galois automorphism, it follows that the subspaces are the complexification of, respectively,
$F^{2n}\cap W \cap \overline W, F^{2n}\cap (W +\cap \overline W).$ We notice that the
 quotient space $(W +\cap \overline W)/ (W \cap \overline W)$ is of dimension $n+r -(n-r)=2r <2n  . $
  Now, by  above we have that the push forward  to $(W + \overline W)/ (W \cap \overline W)$ of
  the form $\omega $ is a non degenerate form, and the same holds for $h_E.$

  Thus, the inductive hypothesis
  gives a linear transform $$ T : F^{2n}\cap (W + \overline W)/ ( F^{2n}\cap W \cap \overline W)
   \rightarrow F^{2n}\cap (Y + \overline Y)/ ( F^{2n}\cap Y \cap \overline Y)$$ such that $T^\star \omega
    =\omega$, and the complex  extension transforms $W/(W\cap \overline W)$ onto $Y/(Y\cap \overline Y).$
     We lift $T$ to a linear transform $$ T : F^{2n}\cap (W + \overline W) \rightarrow F^{2n}\cap
     (Y +\overline Y)$$ so that $T^\star \omega =\omega$ and the complex extension transforms $W$ onto $Y.$
      Now we apply the theorem of Witt to $T$ to get an element $g$ of $Sp(n,F) $ which carries $W$ into $Y.$
      This completes the induction process and we have the result
      \end{proof}

\begin{cor} $Sp_0(n,F)$ acts transitively in $\mathcal O_k, k=1,\dots,n$
\end{cor}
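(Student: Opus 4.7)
The plan is to transport the previously established transitivity of $Sp(n,F)$ on $\mathcal H_k$ across the Cayley-type element $C \in Sp(n,E)$ constructed in the preceding Proposition. The key input is the conformal identity verified in that proof: there exists a nonzero scalar $\mu \in E$ (namely $\mu = \tau_F i$ when $-1 \notin F$, and $\mu = -(v+\bar v) b$ otherwise) such that
$$ h_0(Cv, Cw) = \mu \, h_E(v,w) \quad \text{for all } v,w \in E^{2n}, $$
together with the intertwining relation $C^{-1} Sp_0(n,F) C = Sp(n,F)$.

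First I would observe that, since $C \in Sp(n,E)$, the map $W \mapsto CW$ is a bijection of $\mathcal L_{E,2n}$ to itself, so it sends Lagrangians to Lagrangians. The conformal identity above then implies that for any linear subspace $W \subset E^{2n}$ the rank of $h_0|_{CW \times CW}$ equals the rank of $h_E|_{W \times W}$. Over a finite field, a hermitian form on a finite-dimensional space is classified up to isometry by its rank (this is precisely the ``type'' used to define $\mathcal O_r$), so one concludes
$$ C(\mathcal H_k) = \mathcal O_k \quad \text{for every } k = 0, 1, \dots, n. $$

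Finally I would transfer transitivity: given $W, W' \in \mathcal O_k$, set $U := C^{-1}W$ and $U' := C^{-1}W'$, which lie in $\mathcal H_k$ by the previous step. By the preceding Proposition there exists $g \in Sp(n,F)$ with $gU = U'$, and then $h := CgC^{-1}$ lies in $Sp_0(n,F)$ and satisfies $hW = W'$. This gives transitivity of $Sp_0(n,F)$ on each $\mathcal O_k$. There is essentially no real obstacle here since all ingredients are already in place; the only thing one must be slightly careful about is pointing out that the conformal identity proved in the preceding Proposition immediately yields $C(\mathcal H_k) = \mathcal O_k$, which is what converts the Witt-theoretic transitivity of $Sp(n,F)$ into the desired transitivity of $Sp_0(n,F)$.
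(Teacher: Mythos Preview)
Your proposal is correct and follows precisely the route intended by the paper: the corollary is stated without proof because it is the immediate transport of Proposition~3 through the Cayley element $C$, using the conformality $h_0(Cv,Cw)=\mu\,h_E(v,w)$ established in Proposition~2 (exactly as in the proof of Corollary~2 for the case $k=n$). Your observation that $C(\mathcal H_k)=\mathcal O_k$ is just the sixth bullet of Theorem~1, and the conjugation $h=CgC^{-1}\in Sp_0(n,F)$ is the intended mechanism.
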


 \begin{lem}  $\mathcal H_n$ is   contained in the image of the Siegel map.
  \end{lem}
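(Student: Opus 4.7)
The plan is to exploit Proposition 2, which shows that $Sp(n,F)$ acts transitively on $\mathcal{H}_n$. It therefore suffices to exhibit a single Lagrangian $W_0 \in \mathcal{H}_n$ that belongs to the image of $\mathcal{L}$ and then show that the whole $Sp(n,F)$-orbit of $W_0$ stays in the image of $\mathcal{L}$.

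For the witness I would take $Z_0 := \alpha I_n$ for any $\alpha \in E$ with $\alpha \neq \bar\alpha$. By Lemma 2, $\mathcal{L}(Z_0) \in \mathcal{H}_n$, because $Z_0 - \bar Z_0 = (\alpha - \bar\alpha) I_n$ is invertible, and $\mathcal{L}(Z_0)$ lies in the image of $\mathcal{L}$ by construction. By the Corollary to Lemma 1, the whole $Sp(n,F)$-orbit of $\mathcal L(Z_0)$ lies in the image of $\mathcal L$ if and only if $\alpha C + D$ is invertible for every $g = (A,B,C,D) \in Sp(n,F)$. I would argue this by contradiction: if $(\alpha C + D)x = 0$ for some nonzero $x \in E^n$, set $v := (\alpha x, x)^t \in \mathcal{L}(Z_0) \setminus \{0\}$; then $gv$ lies in $L_+ \setminus \{0\}$. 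Since $L_+$ is totally $h_E$-isotropic (direct check from $h_E(v,w) = \omega(v, \bar w)$) and $g \in Sp(n,F) \subseteq U(E^{2n}, h_E)$ preserves $h_E$, one gets
\[
0 \;=\; h_E(gv, gv) \;=\; h_E(v, v) \;=\; (\alpha - \bar\alpha)\, {}^tx\,\bar x,
\]
so $x$ must be isotropic for the standard hermitian form on $E^n$.

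The hard part is to promote this isotropy to the conclusion $x = 0$. Over $\mathbb R$ a suitable choice of $Z_0$ makes the associated hermitian form definite and the contradiction is immediate; over a finite field with $n \geq 2$ the standard hermitian form on $E^n$ always has nonzero isotropic vectors, so the isotropy condition alone is not enough. To close the argument I would combine the above with Lemma 3, which yields $W \cap \overline W = 0$ for every $W \in \mathcal{H}_n$, together with the $F$-rationality of the pair $(C,D)$: since $C$ and $D$ have entries in $F$, the Galois-conjugate equation $(\bar\alpha C + D)\bar x = 0$ also holds, and decomposing $x$ on an $\{1,\alpha\}$-basis of $E/F$ should convert the two equations into enough $F$-rational data to extract a nonzero Galois-fixed vector inside $W := g\,\mathcal{L}(Z_0)$. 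Such a vector would be a nonzero element of $W \cap \overline W$, contradicting Lemma 3. Making this Galois-descent step precise is the technical core of the proof.
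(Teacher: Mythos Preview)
Your approach is genuinely different from the paper's, and the gap you yourself flag at the end is real and not closed by the sketch you give.

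The paper does \emph{not} use transitivity here at all. It argues directly: writing an arbitrary $W\in\mathcal H_n$ as $W=\{(Rx,Sx):x\in E^n\}$, it normalizes $S$ via permutation matrices and column operations to block form $\bigl(\begin{smallmatrix}B_1&0\\B_3&0\end{smallmatrix}\bigr)$ with $B_1$ invertible of size $r=\operatorname{rank}S$, adjusts $R$ accordingly, and then computes that the matrix ${}^tR\bar S-{}^tS\bar R$ of $h_E|_W$ has its last $n-r$ columns zero. Nondegeneracy forces $r=n$, i.e.\ $S$ is invertible, so $W$ is in the Siegel image. The invertibility of $CZ+D$ for $Z-\bar Z$ invertible is then recorded as a \emph{corollary} of Lemma~5, not as a step toward it.

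Your route reverses this logic: you try to prove the invertibility of $\alpha C+D$ first and deduce the lemma via transitivity. But the reduction via transitivity buys nothing --- showing that every $g\mathcal L(\alpha I_n)$ lies in the Siegel image is, by transitivity, exactly the full statement of the lemma again. So the whole weight falls on the ``technical core'' you leave open. The isotropy ${}^tx\bar x=0$ you obtain is genuinely too weak for $n\geq 2$, and your Galois-descent plan does not work as stated: a nonzero Galois-fixed vector in $W':=g\mathcal L(\alpha I_n)$ would have to be $g(\alpha y,y)$ with $y\in F^n$ and $Ay=Cy=0$ (since $A,B,C,D$ are $F$-rational and $\alpha\notin F$), but $\ker A\cap\ker C=0$ for any symplectic $g$. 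So such vectors never exist --- that is precisely the content of $W'\cap\overline{W'}=0$. Your hypothesis $(\alpha C+D)x=0$ yields $gv\in W'\cap L_+$, and conjugating gives $\overline{gv}\in\overline{W'}\cap L_+$, but neither vector, nor any $F$-linear combination extracted from the $\{1,\alpha\}$-decomposition of $x$, lands in $W'\cap\overline{W'}$; the two $F$-linear equations you obtain simply do not force $Ay=Cy=0$ for some nonzero $y\in F^n$. Without a different mechanism to upgrade ``$\ker(\alpha C+D)$ is hermitian-isotropic'' to ``$\ker(\alpha C+D)=0$'', the argument does not close.
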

  \begin{proof} Let $ W \in \mathcal H_n.$ We may choose representatives $R$ and $S$ for $W$ and  write then $W=\{ \left ( \begin{smallmatrix} R x \\ Sx \end{smallmatrix} \right ), x \in E^n \} $ with   $^t\!R S - \tr S R=0,$  $\left ( \begin{smallmatrix} R  \\ S  \end{smallmatrix} \right )$ of rank $n $ .  Since $ W \in \mathcal H_n$ the matrix $\tr R \bar S - \tr S R $  is invertible.

The  matrix $S$ has rank $r$, with $0 \leq r  \leq n. $ We will show that $r=n.$

 We   choose two $n \times n$ permutation matrices  matrices $P,Q$ in $GL_n(F)$ such that $$ PSQ =\begin{pmatrix}  B_1  & B_2 \\ B_{3}& B_{4}\\
 \end{pmatrix}   $$ where $B_1$ is an invertible $r\times r$ matrix.  We may write $W= \{ \left ( \begin{smallmatrix} R Q x \\ SQ x \end{smallmatrix} \right ), x \in E^n \} $ and $$
 \begin{pmatrix}  (^tP)^{-1}  & 0 \\ 0 &  P\\
 \end{pmatrix} W =  \{ \left ( \begin{smallmatrix} (^tP)^{-1}R Q x \\ (B_1,B_2,B_3,B_4) x \end{smallmatrix} \right ), x \in E^n \} $$  Since $(B_1,B_2,B_3,B_4)$ has rank $r$ performing column operations, we may assume $B_2$ and $B_4$ are the zero matrices. This amounts to a new change of representatives for $W. $ Thus, $ W=  \{ \left ( \begin{smallmatrix} Ax \\ (B_1,0,B_3,0) x \end{smallmatrix} \right ), x \in E^n \} $

 Write $A:=  (A_1,A_2,A_3,A_4)$ with $, A_1 \in M_{r}(F), A_4 \in M_{n-r}(F). $  The hypothesis  $W$ is a Lagrangian, implies $ \tr A (B_1,0,B_3,0) = \tr (B_1,0,B_3,0)  A $ from which $ \tr A_1 +\tr A_3 B_3  = A_1 + \tr B_3 A_3  , A_2 + \tr B_3 A_4 =0.$   The hypothesis  the rank of $(A, (B_1,0,B_3,0))  $ is $n$ implies $A_4$ is invertible.  Hence, replacing $x$ by $ (diag(I_r), 0,0, A_r^{-1})x $ gives   $W=\{ \left ( \begin{smallmatrix} C x \\ Dx \end{smallmatrix} \right ), x \in E^n \} ,$ with $C=(A_1,0,A_3, I_{n-r})$ and  $D= (diag(I_r), 0,B_3, 0).$ The matrix of $h_E$ in this new coordinates is  $$\tr C \bar D - \tr D \bar C= \begin{pmatrix}  \bullet  & 0 \\ \bullet & 0\\
 \end{pmatrix}   .$$ The hypothesis $h_E$ restricted  $W$ has rank $n$ implies then $ n-r=0.$
 \end{proof}


\begin{cor} For any element  $Z \in Sym(E^{n}) $ such that $Z-\bar Z$ is invertible and for any $(A,B,C,D) \in Sp(n,F),$ the matrix $CZ+D$ is invertible.

\end{cor}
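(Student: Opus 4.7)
The plan is to combine three of the earlier results in the excerpt: the characterization of $\mathcal{H}_r$ via $Z-\bar Z$ (Lemma 2), the fact that $\mathcal{H}_n$ lies inside the image of the Siegel map (Lemma 3), and the invertibility criterion of Lemma 1. No new computation should be needed.

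First, I would invoke Lemma 2: the hypothesis that $Z-\bar Z$ is invertible says precisely that the anti-hermitian form defined by $Z-\bar Z$ has rank $n$, so $\mathcal{L}(Z)\in\mathcal{H}_n$. Next, because $Sp(n,F)=U(E^{2n},h_E)\cap Sp(n,E)$, the rank of $h_E$ restricted to a Lagrangian is preserved under the $Sp(n,F)$-action; hence $\mathcal{H}_n$ is $Sp(n,F)$-stable, and consequently the full orbit $Sp(n,F)\cdot \mathcal{L}(Z)$ sits inside $\mathcal{H}_n$.

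Now I would apply Lemma 3, which tells us $\mathcal{H}_n\subset \operatorname{Image}(\mathcal{L})$. Combining the two previous steps, $Sp(n,F)\cdot\mathcal{L}(Z)$ is contained in the image of the Siegel map. At this point Lemma 1, applied with $G=Sp(n,F)$ and this particular $Z$, immediately yields the conclusion: the containment of the orbit in $\operatorname{Image}(\mathcal{L})$ is equivalent to the invertibility of $CZ+D$ for every $(A,B,C,D)\in Sp(n,F)$.

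There is essentially no obstacle here — the statement is a direct corollary of the cited lemmas. The only thing to be careful about is to make explicit the invariance of $\mathcal{H}_n$ under $Sp(n,F)$, which is already recorded in the proof of Lemma 2 (via the identification of $Sp(n,F)$ as the intersection of $Sp(n,E)$ with the unitary group of $h_E$), so the proof reduces to a one-line chain of implications.
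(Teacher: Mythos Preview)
Your proposal is correct and matches the paper's approach: the corollary is stated immediately after the lemma asserting $\mathcal H_n \subset \operatorname{Image}(\mathcal L)$, with no separate proof given, precisely because it follows by the chain of implications you describe (Lemma~2, $Sp(n,F)$-invariance of $\mathcal H_n$, Lemma~5, and Lemma~1/Corollary~1). The only cosmetic discrepancy is numbering---what you call ``Lemma~3'' is Lemma~5 in the paper---but the logical content is identical.
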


 We have completed the proof of theorem 1.

\medskip

Furthermore, we have the following facts:
\smallskip

\begin{rem}
For   a symmetric matrix $Z$ such that $Z -\bar Z$ is not invertible, there exists $(A,B,C,D),(M,N,R,S)  \in Sp(n,F)$ such that $CZ+D$ is invertible and $RZ+S$ is not invertible.

This follows  from  Corollary 1 to lemma 1  and theorem 1.
\end{rem}

\begin{rem}
For $n>1$  and   any symmetric matrix $Z$   there exists $(A,B,C,D), \\
(M,N,R,S)  \in Sp(n,F)_0$ so that $CZ+D$ is invertible and $RZ+S$ is not invertible.

This follows from lemma 1 and theorem 1
\end{rem}

 \section{Isotropy subgroups} The purpose of this section is to explicitly compute the structure of $\mathcal O_k, \mathcal H_k,  k=0, \dots, n$ as homogeneous spaces.  For the real case, this has been accomplished by \cite{takeuchi} \cite{wolf2} and references therein.

An element of $\mathcal O_{n-k} $ is constructed as follows: we define $V_k $ to be the subspace spanned by the vectors $ e_1 + e_{n+1}, \dots,  e_k +  e_{n+k}, e_{k+1}, \dots, e_{n}.$ Then, $V_0 =L_+$.  A simple computation shows that  the form $h_E$ restricted to $V_k \times V_k$ is the null form, whereas the type of the form $h_0$ restricted to $V_k \times V_k$ is $n-k$,. Obviously  $V_k$ is a lagrangian subspace.  Henceforth, for $x \in Sp(n,E)$,   $Ad(x)$ denotes the inner automorphism defined by $x.$ Let $t_k$ be the partial Cayley transform $$t_k := \begin{pmatrix}  D_1  & D_2 \\ D_{3}& D_{4}\\
 \end{pmatrix}   $$ where, $D_1= D_4= diag(\frac{\sqrt 2}{2} I_k, I_{n-k}), D_2 =  diag(-\2 I_k,0), D_3 =-D_2.$  Then, $t_k$ is an element of $Sp(n,E)$ and  $t_k L_+= t_k V_0 =V_k.$  A computation gives $$ t_k^{-1}= \begin{pmatrix}  L_1  & L_2 \\ L_{3}& L_{4}\\
 \end{pmatrix}   $$ where, $L_1= L_4= diag(\frac{\sqrt 2}{2} I_k, I_{n-k}), L_2 =  diag(\2 I_k,0), L_3 =-L_2.$

 Let $\mathcal E_{Sp_0(n,F)}(V_k)$ denote the set stabilizer of $V_k$ in $Sp_0(n,F).$
  The equality $ \mathcal E_{Sp(n,E) }(V_0) =KP^+$ implies $$\mathcal E_{Sp_0(n,F) }(V_k) = Ad(t_k) \mathcal E_{Sp(n,E) }(V_0)  \cap  Sp_0(n,F) = Ad(t_k) (KP^+)    \cap Sp_0(n,F) .$$

 The stabilizer of $V_0$ in $Sp(n,F)$ is $K P_+ \cap Sp_0(n,F) =K \cap Sp_0(n,F)= \{ diag(T, \bar T) : T \in U(n,E) \}$. Thus,  the stabilizer of $V_0$ in $Sp_0(n,F)$ is isomorphic to $U(n,E).$

     The main result of this section is
 \begin{thm} The stabilizer  group $\e$ is isomorphic to the semidirect product of the group $O(k,F) \times U(n-k, E)$ times the unipotent subgroup $ Ad(t_k)(P^+) \cap Sp_0(n,F).$

 \end{thm}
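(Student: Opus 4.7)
The starting point is the identity displayed just before the theorem, $\e = \mathrm{Ad}(t_k)(KP^+) \cap Sp_0(n,F)$. Since $KP^+ = K \ltimes P^+$ is the Siegel parabolic stabilizing $V_0 = L_+$, with $K$ its Levi and $P^+$ its unipotent radical, my plan is to transport this Levi decomposition through the partial Cayley transform $t_k$ and then intersect with $Sp_0(n,F)$. The unipotent factor is $\mathrm{Ad}(t_k)(P^+) \cap Sp_0(n,F)$ by definition, so the task reduces to identifying the Levi factor $\mathrm{Ad}(t_k)(K) \cap Sp_0(n,F)$ with $O(k,F) \times U(n-k,E)$ and to checking that the intersection respects the semidirect factorization.

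To justify the semidirect factorization, write any element of $\mathrm{Ad}(t_k)(KP^+)$ uniquely as $g_s g_u$ with $g_s \in \mathrm{Ad}(t_k)(K)$ semisimple and $g_u \in \mathrm{Ad}(t_k)(P^+)$ unipotent; since $Sp_0(n,F)$ is cut out of $Sp(n,E)$ by polynomial equations, it contains the Jordan components of each of its elements, so $g_s g_u \in Sp_0(n,F)$ forces both $g_s$ and $g_u$ to lie in $Sp_0(n,F)$ separately.

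For the Levi identification I work on the source side. Writing $g = \mathrm{diag}(A, \tr A^{-1}) \in K$ with $A \in GL_n(E)$, the condition $t_k g t_k^{-1} \in Sp_0(n,F) = U(h_0) \cap Sp(n,E)$ is equivalent to $g \in U(h_k)$, where $h_k$ is the sesquilinear form with matrix $H_k := \tr t_k \, \mathrm{diag}(-I_n, I_n) \, \bar t_k$. Plugging in the explicit blocks $D_i$ of $t_k$, this simplifies to
\begin{equation*}
H_k = \begin{pmatrix} \mathrm{diag}(0_k, -I_{n-k}) & \mathrm{diag}(I_k, 0_{n-k}) \\ \mathrm{diag}(I_k, 0_{n-k}) & \mathrm{diag}(0_k, I_{n-k}) \end{pmatrix},
\end{equation*}
the orthogonal sum of a hyperbolic hermitian form of rank $2k$ and a definite hermitian form of rank $2(n-k)$. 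Decomposing $A$ into blocks of sizes $k$ and $n-k$, the four block equations from $\tr g H_k \bar g = H_k$ force the off-diagonal blocks of $A$ to vanish and reduce to an $O(k,F)$ condition on $A_{11}$ coming from the hyperbolic piece and a unitary condition $\tr A_{22} \bar A_{22} = I_{n-k}$ on $A_{22}$ coming from the definite piece; the two resulting subgroups commute, yielding the direct product factor.

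The main obstacle I anticipate is the block-matrix calculation in the Levi step: extracting precisely the orthogonal condition on $A_{11}$ rather than a larger $F$-rational linear group requires careful bookkeeping, and one must handle the case distinction $\sqrt 2 \in F$ versus $\sqrt 2 \in E \smallsetminus F$, which toggles $\bar\alpha$ against $\alpha$ for $\alpha = \sqrt 2 / 2$. Once the Levi is pinned down, normality of the unipotent radical in the stabilizer follows from $P^+ \triangleleft KP^+$, and the two factors assemble into the stated semidirect product.
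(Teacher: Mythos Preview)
The central gap is in your semidirect-factorization step. You write an element of $\mathrm{Ad}(t_k)(KP^+)$ uniquely as $g_s g_u$ with $g_s \in \mathrm{Ad}(t_k)(K)$ ``semisimple'' and $g_u \in \mathrm{Ad}(t_k)(P^+)$ unipotent, and then invoke closure of $Sp_0(n,F)$ under Jordan components. But the Levi decomposition is \emph{not} the Jordan decomposition: an element $\mathrm{diag}(A,\tr A^{-1})\in K$ is semisimple only when $A$ is, and this fails for generic $A\in GL_n(E)$. So the fact that $Sp_0(n,F)$ contains the Jordan parts of each of its elements tells you nothing about whether the $K$-factor and the $P^+$-factor separately land in $Sp_0(n,F)$. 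The paper does not use any soft argument here: in step~(E) it takes $X=(A,B,0,D)\in KP^+$ with $\mathrm{Ad}(t_k)X\in Sp_0(n,F)$, writes out the $Sp_0$ block constraints together with $\tr B D=\tr D B$, and solves them explicitly to force $A_2=A_3=0$ and determine $A_1,A_4$. Some genuine cancellation happens in that computation, and your Jordan shortcut does not replace it.

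There is a second problem in the Levi identification. Your form $H_k$ is correct (up to the sign ambiguity you flag), but the hyperbolic block does not impose orthogonality on $A_{11}$. For $g=\mathrm{diag}(A_{11},\tr A_{11}^{-1})$ acting on the rank-$2k$ hyperbolic hermitian form $\left(\begin{smallmatrix}0&I_k\\ I_k&0\end{smallmatrix}\right)$, the isometry condition reduces to $\tr A_{11}\,\tr\bar A_{11}^{-1}=I$, i.e.\ $A_{11}=\bar A_{11}$, giving only $A_{11}\in GL_k(F)$, not $A_{11}\in O(k,F)$. A sanity check at $n=k=1$ confirms this: the stabilizer of $V_1=E\left(\begin{smallmatrix}1\\1\end{smallmatrix}\right)$ in $Sp_0(1,F)$ has order $q(q-1)$, whereas $O(1,F)$ times the unipotent factor would give only $2q$. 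So if you run your own computation honestly it will not reproduce the $O(k,F)$ factor asserted in the statement; you should confront that discrepancy rather than assume the hyperbolic piece delivers an orthogonal group.
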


 The proof of the result requires some computations, which we carry out.

 First, we verify that the subgroup of $Sp_0(n,F)$,  $ diag(S,T, S, \bar T), S $ in $O(k,F), T$ in $ U(n-k,E)$ is contained in $\e.$ For this, we write for  $ v \in V_k, v= \begin{pmatrix} x \\ y \\ x \\ 0 \\ \end{pmatrix} $ with $x \in E^{k}, y \in E^{n-k}.$ Hence, $$ diag(S,T, S, \bar T) v= \begin{pmatrix} S x \\ Ty \\ Sx  \\ 0 \\ \end{pmatrix} \in V_k .$$ Is clear that the unipotent subgroup is contained in $\e.$

 For a matrix $T \in E^{n\times n}$ we write $$T =\begin{pmatrix} T_1 & T_2 \\ T_3 & T_4 \\ \end{pmatrix}, T_1 \in E^{k \times k}, T_2 \in E^{k \times n-k}, T_3  \in E^{n-k \times k}, T_4 \in E^{n-k \times n-k}$$ And for $(A,B,0,D) \in KP^+ $ we have \begin{multline*} Ad(t_k)(A,B,0,D) \\ = \begin{pmatrix} \frac12 (A_1 -B_1 +D_1) & \2 A_2 & \frac12 (A_1 +B_1 -D_1) & \2 (B_2 -D_2) \\ \2(A_3 -B_3) & A_4 & \2 (A_3 +B_3) & B_4 \\ \frac12 (A_1 -B_1 -D_1) & \2 A_2 & \frac12 (A_1 +B_1 +D_1) & \2 (B_2 +D_2) \\ -\2 D_3 & 0 & \2 D_3 & D_4 \\ \end{pmatrix} .\end{multline*}    Next, we show that $Ad(t_k) K \cap Sp_0(n,F)$ is equal to the subgroup \\ $\{  diag( S, T, S, \tr T^{-1}): S \in O(k,F), T  \in U(n,E)\}.$ In fact,  the computation for $Ad(t_k)X$ gives for $S \in O(k,F), T  \in U(n,E)$ that \\ $ Ad(t_k)(diag(S,T, S, \tr T^{-1}))= diag( S, T, S, \tr T^{-1}).$

 Now for  $ diag(A,D) = diag(A, \tr A^{-1})  \in K, $  such that $Ad(t_k)(diag(A,D)) \in Sp_0(n,F),$   (1.2) and the formula for $Ad(t_k)X$  imply  the equalities $$ \overline{ (A_1 + D_1)} = A_1 +D_1, \hskip 0.5cm \bar A_2 =D_2 \hskip 0.5 cm \bar A_3 = D_3, \hskip 0.5cm \bar A_4 = D_4 $$ and $$ \overline{A_1 - D_1} =A_1 -D_1, \hskip 0.5cm A_2 =-\bar D_2, \hskip 0.5cm A_3 =-\bar D_3 $$ So $$D_2=A_2=0,\hskip 0.5cm D_3 =A_3 =0, \hskip 0.5cm, \bar A_1= A_1, \bar D_1=D_1.$$ Hence, $A_1 \in O(n,F).$ Finally, the equality $D=\tr A^{-1}$ yields, $A_1= D_1,$  which shows the claim.

 Now $Ad(t_k)P^+ \cap Sp_0(n,F)=\{ Ad(t_k)(I_n, B,0,I_n) : \tr B =B \, \text{and} \,  \bar B_1=  -B_1, B_3=\tr B_2 =0 , B_4=0\}.$ In fact, the formula for  $Ad(t_k)X$ leads us

  to $$ Ad(t_k)(I,B,0,I) = \begin{pmatrix} \frac12 (2I -B_1) & 0 & \frac12 B_1 & \2 B_2  \\ -\2 B_3 & I & \2 B_3 & B_4 \\ -\frac12  B_1  & 0 & \frac12 (2I+B_1) & \2 B_2  \\ 0 & 0 & 0 & I \\ \end{pmatrix} $$

From (1.2) we get $\bar B_1 = -B_1, \hskip 0.3cm B_2=0, \hskip 0.1cm  \ B_4=0 ,$ and the equality follows.
\smallskip

(E) We will show at this point the equality \\
$\e =(Ad(t_k)K \cap Sp_0(n,F)) (Ad(t_k)P^+ \cap Sp_0(n,F)).$

Let $X \in KP^+ $ so that $Ad(t_k)X \in Sp_0(n,F).$ Condition (1.2) gives us the following equalities, $$ \bar A_1 -\bar B_1 +\bar D_1= A_1 +B_1 +D_1, \hskip 0.3cm B_2 +D_2 =\bar A_2, \hskip 0.3cm \bar A_3 -\bar B_3 =D_3, \hskip 0.3cm \bar A_4 = D_4$$ $$ \bar A_1 +\bar B_1 -\bar D_1= A_1 -B_1 -D_1, \hskip 0.3cm \bar B_2 -\bar D_2 =A_2, \hskip 0.3cm \bar A_3 +\bar B_3 = -D_3, \hskip 0.3cm B_4=0.$$ From the second equality on each line,  we deduce $D_2=0.$ Thus, $B_2 =\bar A_2.$    From the third equality in both lines we obtain $\bar A_3=0$. Hence $A_3=0$  and $B_3 =-\bar D_3$. Next $\tr A D -\tr B 0=I$ give us  $D= \tr A^{-1}.$ Explicitly  $D=(\tr A_1^{-1}, 0, -\tr (A_1^{-1}A_2A_4^{-1}), \tr A_4^{-1}).$ Since $(A,B,0,D) \in Sp(n,E)$ and so $\tr B D =\tr D B.$  The computation of the last equality lead us to $$ \begin{pmatrix} A_1^{-1} B_1 -\tr Y \bar Y & A_1^{-1} \bar A_2 \\ A_4^{-1}\bar Y & 0 \\ \end{pmatrix} = \begin{pmatrix} \tr B_1 \tr A_1^{-1}  -\tr \bar Y  Y &  \tr \bar Y \tr A_4^{-1}  \\ -\tr \bar A_2^{-1} \tr A_1^{-1} & 0 \\ \end{pmatrix} $$
where $Y:=\tr (A_1^{-1}A_2A_4^{-1})$

Now,  the equality of the (2,1)-coefficients  gives $A_4^{-1} \tr \bar A_4^{-1} \tr \bar A_2 \bar A_1^{-1} = -\tr \bar A_2 \tr  A_1^{-1},$  which, after we transpose both members of the last equality, we obtain $$ \bar A_1^{-1} \bar A_2 \bar A_4^{-1} \tr  A_4^{-1} = -A_1^{-1} \bar A_2. $$ From, equality of the (1,2)-coefficients implies $$  \bar A_1^{-1} \bar A_2 \bar A_4^{-1} \tr  A_4^{-1} = A_1^{-1} \bar A_2. $$ Thus, $A_2=0$ and we have that $$(A,B,0,D)= (diag(A_1, A_4), diag(B_1,0), 0, diag(\tr A_1^{-1}, \tr A_4^{-1})). $$ The hypothesis $Ad(t_k)(A,B,0,D) \in Sp_0(n,F)$ let us conclude that $A_1 \in O(k,F), \\ A_4 \in U(n-k,E)$. From here, (E) is shown, and the theorem follows.

 \section{Anti-involutions in $Sp(n,F).$}

In this section we analyze the structure on the set of anti-involutions in the group $Sp(n,F)$.  We will show that this set  is a homogeneous space for $Sp(n,F)$.\\
The denote by $\mathcal C(n,F)$ the set of anti-involutions ,i.e.,
 $$\mathcal C(n,F)=\{ T \in Sp(n,F): T^2 =-1 \}.$$
 \begin{prop}

 $\mathcal C(n,F)$ is equivariant isomorphic to $\mathcal H_n$ when $-1$ is not a square in $F,$ whereas is isomorphic to \\$Sp(n,F)/(Sp(n,F)\cap K) $ when $-1$ is a square in $F.$
 \end{prop}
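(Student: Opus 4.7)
The plan is to send each anti-involution $T\in\mathcal C(n,F)$ to its $+i$-eigenspace and show this yields the stated $Sp(n,F)$-equivariant identification, using the orbit descriptions already established. The argument splits according to whether $\sqrt{-1}$ lies only in $E$ or already in $F$.

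For the case $-1\notin F^{2}$, I would fix $i\in E$ with $i^{2}=-1$ (so $\tau(i)=-i$) and, for $T\in\mathcal C(n,F)$, extend scalars to $E$ to form the $\pm i$-eigenspaces $V_{\pm i}(T)\subset E^{2n}$. Since $T$ has $F$-entries it commutes with $\tau$, and $\tau$ swaps the two eigenvalues, so $\overline{V_{+i}(T)}=V_{-i}(T)$ and $E^{2n}=V_{+i}(T)\oplus V_{-i}(T)$ with each summand of dimension $n$. The symplectic invariance of $T$ gives $\omega(v,w)=\omega(Tv,Tw)=i^{2}\omega(v,w)=-\omega(v,w)$ for $v,w\in V_{+i}(T)$, hence $V_{+i}(T)$ is Lagrangian; a short orthogonality check using $\overline{V_{+i}}=V_{-i}$ and nondegeneracy of $\omega$ places it in $\mathcal H_n$. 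The map $\Phi:T\mapsto V_{+i}(T)$ is then manifestly $Sp(n,F)$-equivariant. For the inverse, given $W\in\mathcal H_n$ the lemma preceding Proposition 2 gives $W\cap\overline W=W^{\perp_{h_E}}=0$, so $E^{2n}=W\oplus\overline W$; I would define $T_W$ to act as $i$ on $W$ and as $-i$ on $\overline W$. Because $\tau$ swaps the summands and $\tau(i)=-i$, the map $T_W$ commutes with $\tau$ and descends to an $F$-linear map, and a block-by-block verification on $W\times W$, $W\times\overline W$ and $\overline W\times\overline W$ gives $T_W\in Sp(n,F)$ with $T_{W}^{\,2}=-1$. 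Combined with the transitivity of $Sp(n,F)$ on $\mathcal H_n$ (Proposition 2), this yields $\mathcal C(n,F)\cong\mathcal H_n$.

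For the case $-1\in F^{2}$, I would fix $i\in F$ with $i^{2}=-1$. The diagonalization of $T$ now takes place already over $F$, and the eigenspaces $W_\pm(T)\subset F^{2n}$ are transverse $n$-dimensional Lagrangians by the same computation. Conversely, each ordered pair $(W_+,W_-)$ of transverse Lagrangians determines a unique anti-involution acting as $\pm i$ on the corresponding summand, the symplectic condition reducing to $\omega(iv_+,-iv_-)=\omega(v_+,v_-)$ together with the already-known vanishing of $\omega$ on each $W_\pm\times W_\pm$. This identifies $\mathcal C(n,F)$ equivariantly with the set of ordered transverse Lagrangian pairs in $F^{2n}$. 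A Witt-style argument, analogous to those in Proposition 2, gives transitivity of $Sp(n,F)$ on such pairs, while a direct check identifies the stabilizer of the standard pair $(L_+,L_-)$ with $Sp(n,F)\cap K$; this produces the claimed homogeneous space description.

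The main obstacle I expect is the $F$-rationality of $T_W$ in the first case: one must check that acting as $i$ on $W$ and $-i$ on $\overline W$ really defines an $F$-linear map of $F^{2n}$, which hinges on $\tau(i)=-i$ swapping the two eigenspaces and on the nondegeneracy of $h_E$ on $W$ (so that $E^{2n}=W\oplus\overline W$ is $\tau$-stable). A secondary bookkeeping point is that the Witt-style transitivity in the second case must be stated for \emph{ordered} pairs of transverse Lagrangians, but this follows exactly the pattern of the orbit arguments already used in the paper.
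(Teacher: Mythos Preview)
Your proposal is correct and follows the same eigenspace strategy as the paper: in both cases one sends $T\in\mathcal C(n,F)$ to its $+i$--eigenspace, checks it is Lagrangian, and analyzes the resulting $Sp(n,F)$--map. The only noteworthy difference is in how bijectivity is established in the first case: the paper observes that the eigenspace map is injective and equivariant into $\mathcal H_n$, and then invokes the already--proven transitivity of $Sp(n,F)$ on $\mathcal H_n$ to conclude it is a bijection, whereas you build the inverse $W\mapsto T_W$ explicitly (using $W\cap\overline W=0$ to get $E^{2n}=W\oplus\overline W$, then letting $T_W$ act as $\pm i$ on the summands and checking $F$--rationality via $\tau(i)=-i$). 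Your construction is slightly more self--contained; the paper's argument is shorter but leans on the orbit classification. In the second case your framing via ordered transverse Lagrangian pairs is exactly the content of the paper's direct conjugation of $T$ to $H=\mathrm{diag}(iI_n,-iI_n)$, just said more conceptually.
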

    It is clear that  $\mathcal C(n,F)$ is invariant under conjugation. Since $J =(0, I_n, -I_n,0) $ is an element of $Sp(n,F)$ we have that $JT$ is an element of $Sp(n,F).$
   \smallskip
    The poof of the proposition will follow from the next three lemmas
    \smallskip

\begin{lem} i) Let $T$ be an involution, then $JT$ is a symmetric matrix. That is, $\tr(JT)=JT$

ii) For $T \in Sp(n,F),$  such that   $JT$ is  symmetric, we have that
$T$ is an involution.

\end{lem}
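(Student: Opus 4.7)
The proof rests on two elementary identities satisfied by the matrix $J$: namely $\tr J = -J$ (since $J$ is skew-symmetric) and $J^2 = -I_{2n}$ (so $J^{-1} = -J$). The symplectic condition $T \in Sp(n,F)$, which reads $\tr T J T = J$, can then be rewritten in the form $\tr T J = J T^{-1}$ by right-multiplying by $T^{-1}$. This single rearrangement is the engine of both parts.

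For part (i), the plan is to assume $T^2 = -I$, so that $T^{-1} = -T$, and compute
\[
\tr(JT) \;=\; \tr T \,\tr J \;=\; -\tr T\, J \;=\; -J T^{-1} \;=\; -J(-T) \;=\; JT,
\]
where the third equality uses the rearranged symplectic condition. This shows $JT$ is symmetric.

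For part (ii), the plan is to reverse this chain. Assume $T \in Sp(n,F)$ and that $JT$ is symmetric, i.e.\ $JT = \tr(JT) = \tr T\,\tr J = -\tr T\, J$. Applying the symplectic condition $\tr T J = J T^{-1}$ to the right-hand side gives $JT = -J T^{-1}$; multiplying on the left by $J^{-1}$ yields $T = -T^{-1}$, hence $T^2 = -I$, so $T \in \mathcal C(n,F)$.

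I do not anticipate any obstacle beyond sign bookkeeping; the whole argument is a two-line manipulation of the symplectic relation combined with $\tr J = -J$. The only point worth flagging is terminology: the statement of (i) says ``involution'' but in context this must mean an element of $\mathcal C(n,F)$, i.e.\ $T^2 = -I$, which is what the computation uses.
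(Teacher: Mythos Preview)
Your proof is correct and follows essentially the same route as the paper: both parts hinge on combining $\tr J=-J$ with the rearranged symplectic relation $\tr T\,J = J T^{-1}$, and your computations match the paper's line by line (your part (ii) is just a slight repackaging of the paper's substitution of $\tr T J = -JT$ into $\tr T J T = J$). Your remark on the terminology is apt: in this section ``involution'' is being used for elements of $\mathcal C(n,F)$, i.e.\ $T^2=-I$.
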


Proof:  Recall $\tr J=-J, \tr T J T= J, T^2 = -1$ Hence,  $ \tr (JT) = -\tr T J =-J T^{-1}=JT.$  For the second statement, we have $\tr(JT)=JT$ hence $ J= -\tr T^{-1} JT =\tr T JT $ thus $ T^2 =-I.$ \begin{flushright} $\Box$ \end{flushright}


\bigskip

According  to lemma 6, to each involution $T$ in $Sp(n,F)$ we naturally associate a symmetric non-degenerate bilinear form $b_T$ on $F^{2n}.$ The matrix of the form $b_T$  in the canonical basis is $JT.$

Now, from the classification of symmetric non-degenerate bilinear forms on $F^{2n}$ we have that $b_T$ is either equivalent to the Euclidean form  $x_1^2 +\dots +x_{2n}^2 $ or to the non-Euclidean form  $x_1^2 +\dots +x_{2n-1}^2+ c x_{2n}^2 $  where $c \in F$ is not a square.

Since $det(JT)=1.$
we obtain


 \begin{rem}The form  $b_T$ is always equivalent to the Euclidean form.

  The group $Sp(n,F)$ acts on $Sp(n,F) \cap Sym(F^{2n})$ by the formula  $$ (g,S) \rightarrow \tr (g^{-1})S g^{-1}.$$ It readily follows that the map $ \mathcal C(n,F) \ni T \rightarrow JT \in  Sp(n,F) \cap Sym(F^{2n})$ intertwines the respective actions of $Sp(n,F).$

  Hence, for $g \in Sp(n,F)$ the forms $b_T$  and $ b_{gTg^{-1}}$ are equivalent.
\end{rem}

\bigskip

To continue with, we split up the analysis of $\mathcal C(n,F)$  into the two possible cases, namely, $-1$ is either a square in $F$ or $-1$ is not a square in $F.$

\bigskip
\medskip

We assume first that  $-1$ is not an square in $F.$ Let us fix a square root $i \in E$ of $-1.$

\smallskip

For an anti involution $T\in Sp(n,F)$ we have that $T$ is a semisimple linear map with possible  eigenvalues $i, -i$ because the minimal polynomial of $T$ divides $x^2 +1.$

Let $V_i(T) $ (resp $V_{-i}(T)$) the corresponding possible eigenspace in $E^{2n}.$ Hence, $E^{2n}= V_i(T) \oplus V_{-i}(T),$ and we have

\begin{prop} i) Both subspaces $V_{i}(T), V_{-i}(T)$ are nonzero.

ii) $\overline{V_{i}(T)} = V_{-i}(T).$

iii) $F^{2n} \cap V_{i}(T) = F^{2n} \cap V_{-i}(T) =\{0\}.$

iv) The map  $ F^{2n} \ni v \rightarrow v -iTv \in V_{i}(T)$ is linear bijection over $F.$

v) $V_{i}(T) $ (resp $V_{-i}(T)$) is a lagrangian subspace.

vi)  $h_E (v -iTv, w -iTw) =2 \omega (v,w) + 2i b_T(v,w), \, \text{for} \, v , w \in F^{2n}. $

vii) The decomposition $E^{2n}= V_i(T) \oplus V_{-i}(T)$ is orthogonal with respect to $h_E.$

viii)  $h_E$ restricted to $V_i(T)$ is non degenerate.
\end{prop}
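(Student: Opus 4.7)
The proposition gathers eight properties of the $\pm i$-eigenspaces of an anti-involution $T \in Sp(n,F)$, and my plan is to exploit systematically two structural facts: $T$ has entries in $F$ (so $T$ commutes with the Galois involution $\tau$), and $T$ preserves $\omega$.

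First I would dispose of the Galois-theoretic items (ii) and (iii). For (ii), $T\bar v = \overline{Tv}$ turns $Tv = iv$ into $T\bar v = -i\bar v$, giving $\overline{V_i(T)} \subseteq V_{-i}(T)$, with equality by applying $\tau$ again. For (iii), a vector $v \in F^{2n} \cap V_i(T)$ would satisfy $Tv = iv$ with the left side in $F^{2n}$, forcing $v=0$ since $i \notin F$. Part (i) then follows because $V_i(T) = 0$ would mean $T$ acts as $-iI$ on $E^{2n}$, contradicting $T \in M_{2n}(F)$; symmetrically $V_{-i}(T) \neq 0$. Combining (i) with (ii) forces $\dim_E V_i(T) = \dim_E V_{-i}(T) = n$.

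For (iv) I would set $\phi(v) := v - iTv$ and check that $T\phi(v) = Tv + iv = i\phi(v)$, so $\phi$ lands in $V_i(T)$. Injectivity: $v - iTv = 0$ implies $Tv = -iv$, i.e.\ $v \in V_{-i}(T) \cap F^{2n} = \{0\}$ by (iii). Since $\dim_F F^{2n} = \dim_F V_i(T) = 2n$, the map is a bijection. Part (v) follows from $\omega(v,w) = \omega(i^{-1}Tv, i^{-1}Tw) = -\omega(Tv,Tw) = -\omega(v,w)$ for $v,w \in V_i(T)$, making $V_i(T)$ an isotropic subspace of dimension $n$, hence Lagrangian; symmetrically for $V_{-i}(T)$.

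Part (vi) is the main computation. Expanding
$$h_E(v - iTv,\, w - iTw) = \omega(v - iTv,\, \overline{w - iTw}) = \omega(v - iTv,\, w + iTw)$$
and using $\omega(Tv,Tw) = \omega(v,w)$ collapses to $2\omega(v,w) + i\bigl[\omega(v,Tw) - \omega(Tv,w)\bigr]$. By Lemma 6, $JT$ is symmetric, so $JT = \tr(JT) = \tr T \tr J = -\tr T\, J$, i.e.\ $\tr T J = -JT$, and the bracket becomes $\tr v(JT - \tr T J)w = 2\,\tr v\, JT\, w = 2 b_T(v,w)$. Part (vii) follows because for $v \in V_i(T),\, w \in V_{-i}(T)$ we have $\bar w \in V_i(T)$ by (ii), so $h_E(v,w) = \omega(v,\bar w) = 0$ by (v). Finally (viii) is forced: $h_E$ is non-degenerate on $E^{2n} = V_i(T) \oplus V_{-i}(T)$, and the decomposition is $h_E$-orthogonal by (vii), so the restriction to each summand must be non-degenerate. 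The principal obstacle I anticipate is verifying the identity $\tr T J = -JT$ cleanly from Lemma 6 and $\tr J = -J$, together with carefully tracking the $i$ versus $-i$ bookkeeping in parts (iv) and (vi); once that is in hand, the remaining items are dimension counting and one-line manipulations.
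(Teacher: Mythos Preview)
Your proof is correct and follows essentially the same approach as the paper: both rely on the two structural facts that $T$ preserves $\omega$ and that $T$ has entries in $F$ (equivalently, $T$ preserves $h_E$ and commutes with the Galois automorphism), together with $i\notin F$. The paper's proof is a terse sketch citing these facts and spelling out only the computation for (v) and the implication (vii)$\Rightarrow$(viii), whereas you have carefully filled in each item, including the explicit use of Lemma~6 for part (vi); your argument is a faithful and complete elaboration of the paper's outline.
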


{\it Proof:} The result from the facts $T \in U(h_\mathbb R, E^{2n}) \cap Sp(n,E)$  and  $i \notin F. In particular, $  viii) follows from vii) and that $h_E$ is non degenerate.  For $x,y \in V_i(T), \omega(x,y)=\omega(Tx,Ty)=ii\omega(x,y)=-\omega(x,y).$  \begin{flushright} $\Box$ \end{flushright}

\bigskip
\smallskip


 Let $v_j -iTv_j, j=1, \dots, n$ denote an orthonormal basis of $V_i(T)$ for the restriction of $\frac{1}{2i} h_E .$
Then, $v_1, \dots, v_n$ span a lagrangian subspace of $F^{2n}$  and $v_1, \dots, v_n, Tv_1, \dots, Tv_n$ is a basis for $F^{2n}.$    \\    In fact, from vi) we obtain $ w(v_k, v_s)= 0,   b_T(v_k, v_s)= \delta_{k,s}$. The last statement follows from  $T^2 =-1$ applied to $\sum_{1 \leq j \leq n} c_j v_j + d_j Tv_j =0$ for $c_j, d_j \in F$ and a short computation.

\begin{lem} Assume $-1$ is not a square in $F$. Then, the action of $Sp(n,F)$ in  $\mathcal C(n,F)$ is transitive.

\end{lem}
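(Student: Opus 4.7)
The plan is to use the ``adapted'' basis constructed in Proposition 3 and the comments that follow it, turning each anti-involution into a normal form with respect to $\omega$, so that any two elements of $\mathcal C(n,F)$ are symplectically equivalent.

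Fix $T_1, T_2 \in \mathcal C(n,F)$. Since $-1$ is not a square in $F$, the setup of Proposition 3 applies to both. For each $T_\alpha$ the form $\tfrac{1}{2i}h_E$ is a non-degenerate Hermitian form on the Lagrangian $V_i(T_\alpha)$, and over a finite extension $E/F$ a non-degenerate Hermitian form always admits an orthonormal basis. Pick such a basis $v_1^{(\alpha)} - i T_\alpha v_1^{(\alpha)}, \dots, v_n^{(\alpha)} - i T_\alpha v_n^{(\alpha)}$, with $v_k^{(\alpha)} \in F^{2n}$ supplied by part (iv) of Proposition 3. The paragraph following Proposition 3 then tells us that $v_1^{(\alpha)}, \dots, v_n^{(\alpha)}$ span a Lagrangian of $F^{2n}$ with $\omega(v_k^{(\alpha)}, v_s^{(\alpha)}) = 0$ and $b_{T_\alpha}(v_k^{(\alpha)}, v_s^{(\alpha)}) = \delta_{ks}$, and that $v_1^{(\alpha)}, \dots, v_n^{(\alpha)}, T_\alpha v_1^{(\alpha)}, \dots, T_\alpha v_n^{(\alpha)}$ is an $F$-basis of $F^{2n}$.

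Define $g \in GL(F^{2n})$ as the unique $F$-linear map determined by
\[
g(v_k^{(1)}) = v_k^{(2)}, \qquad g(T_1 v_k^{(1)}) = T_2 v_k^{(2)}, \qquad k=1,\dots,n.
\]
I then need two things: that $g$ lies in $Sp(n,F)$, and that $g T_1 g^{-1} = T_2$. The second is immediate from the definition: on every basis vector of the form $v_k^{(1)}$ or $T_1 v_k^{(1)}$, conjugation of $T_1$ by $g$ agrees with $T_2$ (using $T_\alpha^2 = -I$ to handle the image of $T_\alpha v_k^{(\alpha)}$). For the first, I verify that $g$ preserves $\omega$ on all pairs of basis vectors. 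The three cases are $\omega(v_k^{(\alpha)}, v_s^{(\alpha)}) = 0$ (Lagrangian), $\omega(T_\alpha v_k^{(\alpha)}, T_\alpha v_s^{(\alpha)}) = \omega(v_k^{(\alpha)}, v_s^{(\alpha)}) = 0$ (because $T_\alpha \in Sp(n,F)$), and the mixed case
\[
\omega(v_k^{(\alpha)}, T_\alpha v_s^{(\alpha)}) = (v_k^{(\alpha)})^{t} J T_\alpha v_s^{(\alpha)} = b_{T_\alpha}(v_k^{(\alpha)}, v_s^{(\alpha)}) = \delta_{ks},
\]
using the identification of the matrix of $b_{T_\alpha}$ with $J T_\alpha$ from Lemma 6. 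Since the corresponding values for $T_1$ and $T_2$ agree, $g$ preserves $\omega$ on a basis and hence globally; thus $g \in Sp(n,F)$ and $g T_1 g^{-1} = T_2$.

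The only real obstacle is the first step, the existence of an orthonormal basis for $\tfrac{1}{2i} h_E$ on $V_i(T_\alpha)$. This is why the finite field hypothesis is used: over a finite field every non-degenerate Hermitian form on an $n$-dimensional space is equivalent to the standard one, so such a basis exists. Once it is produced, everything else is pure bookkeeping with the symplectic form and with the identity $\omega(v, T w) = b_T(v, w)$.
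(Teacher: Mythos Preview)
Your proof is correct, but it takes a genuinely different route from the paper's. The paper argues conceptually: it introduces the equivariant map $\mathcal C(n,F)\ni T\mapsto V_i(T)\in\mathcal L_{E,2n}$, observes that the image lies in $\mathcal H_n$ by part viii), notes injectivity, and then invokes Theorem~1 (transitivity of $Sp(n,F)$ on $\mathcal H_n$) to conclude. Your argument bypasses Theorem~1 entirely: from an orthonormal basis of $V_i(T_\alpha)$ for $\tfrac{1}{2i}h_E$ you extract, via item vi), a symplectic basis $v_1^{(\alpha)},\dots,v_n^{(\alpha)},T_\alpha v_1^{(\alpha)},\dots,T_\alpha v_n^{(\alpha)}$ of $F^{2n}$ adapted to $T_\alpha$, and the change-of-basis map between the two is automatically the conjugating symplectic element. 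What the paper gains is the identification of $\mathcal C(n,F)$ with $\mathcal H_n$ as an $Sp(n,F)$-set (which is part of Proposition~4); what you gain is a self-contained, explicit construction of $g$ that does not rely on the Cayley transform or the analysis of the orbits $\mathcal O_n$, $\mathcal H_n$ developed earlier in Section~3. Both hinge on the same finite-field fact, namely that a non-degenerate Hermitian form over $E/F$ admits an orthonormal basis, but you use it directly on $V_i(T)$ whereas the paper uses its consequences through Lemma~3 and Corollary~2. (A minor remark: the proposition you cite as ``Proposition~3'' with items (iv), (viii) is the one immediately preceding the lemma; in the paper's running numbering it is not number~3.)
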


\begin{proof}
Proposition 6 gives rise to a map from $\mathcal C(n,F) $ to $\mathcal L_{E,2n}$ by the rule  $$\mathcal C(n,F) \ni T \longrightarrow V_i(T)$$ From viii) we have the image of the map is contained in $\mathcal H_n.$ For $g \in Sp(n,F)$ we have the equality $ g V_i(T)=V_i(gTg^{-1}),$  which shows that the map is equivariant. The maps is obviously injective. Since $\mathcal H_n$ is an orbit of $Sp(n,F)$ (Theorem 1) we have that the map is a bijection and hence the result
\end{proof}

\smallskip
Next, we assume $-1=i^2 $ with $i \in F.$ Then, due to the semisimplicity of $T$ we have the decomposition $F^{2n}=(F^{2n} \cap V_i(T)) \oplus (F^{2n} \cap V_{-i}(T)).$

From the equalities $  \omega(x,y)=-\omega(x,y) \, \text{for}\, x,y \in V_i(T),$ we have that the subspaces $F^{2n} \cap V_i(T),  F^{2n} \cap V_{-i}(T)$ are isotropic, Corollary 3 pag 81 in \cite{Bourbaki}  gives us that both subspaces are lagrangian. Therefore,  the anti hermitian form $ h_E$ restricted to $F^{2n} \cap V_i(T)$ is the null form, which forces to $V_i(T)$ to be an element of $\mathcal H_0.$

\begin{lem} Assume $-1$ is a square in $F$. Then, $\mathcal C(n,F)$ is a homogeneous space equivalent to $Sp(n,F)/(Sp(n,F) \cap K).$
\end{lem}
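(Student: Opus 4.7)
The strategy is to identify $\mathcal{C}(n,F)$ with the set of ordered pairs of transverse Lagrangian subspaces of $F^{2n}$, and then read off the claimed quotient via Witt's theorem.

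The discussion just before the statement already assigns to each $T \in \mathcal{C}(n,F)$ the pair
\[
\bigl(L^+(T),\, L^-(T)\bigr) := \bigl(F^{2n}\cap V_i(T),\; F^{2n}\cap V_{-i}(T)\bigr),
\]
consisting of mutually transverse Lagrangians of $F^{2n}$. I would invert this assignment as follows: for any ordered pair $(L^+, L^-)$ of transverse Lagrangians with $F^{2n} = L^+\oplus L^-$, define $T_{(L^+,L^-)}\in \operatorname{End}_F(F^{2n})$ to equal $iI$ on $L^+$ and $-iI$ on $L^-$. This is $F$-linear because $i\in F$, and visibly squares to $-I$. The one thing to check is that $T:=T_{(L^+,L^-)}$ preserves $\omega$: for $v,w$ both in $L^+$ (or both in $L^-$) one has $\omega(Tv,Tw)=i^2\omega(v,w)=-\omega(v,w)=0=\omega(v,w)$ by the Lagrangian condition, while for $v\in L^+$, $w\in L^-$ the cross factor $i\cdot(-i)=1$ gives $\omega(Tv,Tw)=\omega(v,w)$. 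Thus $T \in Sp(n,F)$. The two constructions are mutually inverse by the very definitions on the direct sum, and both are $Sp(n,F)$-equivariant because $V_{\pm i}(gTg^{-1})=g\,V_{\pm i}(T)$.

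To conclude, I would invoke Witt's theorem: $Sp(n,F)$ acts transitively on Lagrangians in $F^{2n}$, so after conjugation we may assume $L^+=L_+$. Inside the Siegel parabolic stabilizing $L_+$, the unipotent radical $P^+\cap Sp(n,F)$ acts transitively on Lagrangian complements of $L_+$, since such complements are precisely the subspaces $\{(Zx,x):x\in F^n\}$ with $Z\in \operatorname{Sym}(F^n)$, on which $P^+$ acts by $Z\mapsto Z+B$. Hence $Sp(n,F)$ is transitive on ordered pairs of transverse Lagrangians, and the stabilizer of the base pair $(L_+,L_-)$ equals
\[
\{\operatorname{diag}(A,\,^t\!A^{-1}) : A\in GL_n(F)\} \;=\; Sp(n,F)\cap K.
\]
Combining with the bijection of the previous paragraph yields the desired $\mathcal{C}(n,F)\simeq Sp(n,F)/(Sp(n,F)\cap K)$.

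The only mildly subtle step is the symplectic check for $T_{(L^+,L^-)}$, which crucially uses $i\in F$ (so that $T$ is actually defined over $F$ and not merely over $E$); beyond that, the argument is linear algebra plus Witt. In particular, no new orbit computation is needed because the orbit structure of pairs of transverse Lagrangians is completely controlled by the Siegel parabolic.
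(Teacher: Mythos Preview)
Your argument is correct and is essentially the same as the paper's: both proofs reduce the claim to the transitivity of $Sp(n,F)$ on ordered pairs of transverse Lagrangians (the $\pm i$--eigenspaces of $T$) and identify the stabilizer of the standard pair $(L_+,L_-)$ with $Sp(n,F)\cap K$. The paper phrases this as ``every $T$ is conjugate to the fixed element $H=\mathrm{diag}(iI_n,-iI_n)$'' and takes a small detour through $D=J^{-1}TJ$ before invoking the existence of a symplectic basis adapted to the eigenspace decomposition, whereas you make the bijection $\mathcal{C}(n,F)\leftrightarrow\{\text{ordered transverse Lagrangian pairs}\}$ explicit and justify transitivity via Witt plus the unipotent radical; your version is a bit cleaner but there is no substantive difference.
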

\noindent
\begin{rem}The map $\mathcal C(n,F) \ni T \longrightarrow V_i(T) \in  \mathcal H_0$ is equivariant for $Sp(n,F)$ and in this case is no longer injective (c.f. example 3-a) , due to theorem 1 $\mathcal H_0$ is a homogeneous space for $Sp(n,F),$ hence, the map is surjective. 
\end{rem}
\noindent
We now show lemma 7. We set $$ H:=\begin{pmatrix}  iI_n  & 0\\
 0 &   - i I_n \\ \end{pmatrix}.$$ \noindent  Then, $H \in \mathcal C(n,F).$  Let $T$  be an anti involution in $Sp(n,F)$ we will show that $T$ is conjugated in $Sp(n,F)$ to the matrix $H.$ For this, we define
$D:=J^{-1}TJ,$  which is another anti involution in $Sp(n,F)$.

The minimal polynomial of $J^{-1}TJ$ divides the polynomial $x^2 +1=(x-i)(x+i)$ .    Hence, $D:=J^{-1}TJ$ is  diagonalizable over $F.$

Let $W_{\pm i}$ the associated eigenspaces.   Thus, $F^{2n}=W_i \oplus W_{-i}$.

 Since for every $v,w \in F^{2n}, \omega (Dv,Dw)=\omega (v,w),$ we have that $W_{\pm i}$ are isotropic subspaces for $\omega.$ The hypothesis that  $\omega$ is non degenerate forces,  $W_{\pm i}$ to be lagrangian subspaces.  Thus,  there exists $P \in Sp(n,F)$ so that

  $Pe_1, \dots, Pe_n$ is a basis for $W_i,$ \hskip 1.0cm $Pe_{n+1}, \dots, Pe_{2n}$ is a basis for $W_{-i}$


\noindent
  We have  \begin{center} $ D Pe_j = i Pe_j = P(ie_j)=P H(e_j), j=1, \dots, n, $

   $ D Pe_j =-i Pe_j=P(-ie_j)=PH(e_j)  , j=n+1, \dots, 2n.$

   \end{center}

   Hence, $DP = PH. $ That is, $$ PH = DP=J^{-1}T J P.$$ Therefore,

   $$ H=P^{-1} J^{-1} T J P=(JP)^{-1} T (JP).$$ The matrices in $Gl(2n,F)$ which commute  with $H$ are the matrices $diag(A,B),\\  A ,B, \in Gl_n(F).$  Thus, the isotropy at $H$ is $Sp(n,F)\cap K.$
\begin{flushright} $\Box$ \end{flushright}

\begin{rem}
 A particular element of $Sp(n,F)$ which conjugates $H$ onto $J$ is the Cayley transform $$ C(e_j)=\frac{1}{-2i} (e_j +i e_{n+j}), j=1, \dots, n, \hskip 0.5cm C(e_{n+j})=e_j -i e_{n+j}, j=1, \dots, n.$$

\end{rem}

\smallskip
\begin{example}
We assume $-1=i^2, i \in F.$

A simple calculation yields
$\mathcal C(1,F) $ is $$\{ \begin{pmatrix} \pm i & x\\
 0 &   -\pm i \\ \end{pmatrix},  \begin{pmatrix} \pm i & 0\\
 y  &   -\pm i \\ \end{pmatrix} , x\in F, y \in F^\times \} $$  union the set $$ \{ \begin{pmatrix} a & -\frac{1+a^2}{c}\\
 c &   -a \\ \end{pmatrix}, c \in F^\times, a \in  F\backslash \{\pm i \} \}$$

 \noindent Hence, the cardinal of the set of involutions is
 $2(q+q-1)+(q-2)(q-1)=  q(q+1).$  The isotropy at $diag(i,-i)$ is the subgroup $diag(a,-a), a \in F^\times.$ Hence $card(Sl(2,F_q))/card(F^\times)= q(q-1)(q+1)/(q-1)= card(\mathcal C(1,F)).$  Also,

$$V_i(\begin{pmatrix}  -i & 0\\
 x &    i \\ \end{pmatrix})=F \begin{pmatrix} 0 \\ 1 \\ \end{pmatrix} ,  \hskip 0.5cm  V_i(\begin{pmatrix}  i & x\\
 0 &   - i \\ \end{pmatrix})=F \begin{pmatrix} 1 \\ 0  \\\end{pmatrix}. $$

 $$V_i(\begin{pmatrix}  a & -\frac{1+a^2}{c}\\
 c &    -a \\ \end{pmatrix})=F \begin{pmatrix} \frac{1+a^2}{c} \\ a-i \\ \end{pmatrix},$$  $$V_i(\begin{pmatrix}  -i & x\\
 0 &    i \\ \end{pmatrix})=F\begin{pmatrix} x \\ 2i \\ \end{pmatrix} , V_i(\begin{pmatrix}  i & 0\\
 x &    -i \\ \end{pmatrix})=F\begin{pmatrix} 2i \\x \\ \end{pmatrix}. $$

\end{example}

\bigskip

\subsection{The case $T^2=a, a$ square}

Let $F$ be a field of odd characteristic, and let $\omega$ be a non degenerate alternating form in  $V=F^{2n}$ . We fix  $a \in F$ and define $$S_a :=\{T \in Sp(w): T^2=a Id\}$$

 for $a=1 $ the identity matrix belongs to $S_a$

 for $a=-1$ the matrix $J$ belongs to $S_a$

 \begin{prop} For $a \notin \{1,-1\}$ and $a=b^2, b \in F$ the set $S_a$ is empty.
 \end{prop}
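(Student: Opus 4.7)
My plan is to start from the factorization $(T-bI)(T+bI)=T^2-b^2I=0$ and exploit the simultaneous diagonalization this forces together with the symplectic condition. The characteristic being odd guarantees $b\neq -b$ when $b\neq 0$, so the minimal polynomial of $T$ splits into distinct linear factors over $F$, whence $T$ is diagonalizable with eigenvalues in $\{b,-b\}$. The case $b=0$ (i.e.\ $a=0$) is immediately impossible since any $T\in Sp(\omega)$ is invertible and so $T^2=0$ cannot hold. From now on assume $b\in F^\times$ and set $V_{+}:=\ker(T-bI)$, $V_{-}:=\ker(T+bI)$, so that $V=V_{+}\oplus V_{-}$.

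The symplectic invariance of $\omega$ under $T$ forces both eigenspaces to be isotropic: for $v,w\in V_{+}$,
\[
\omega(v,w)=\omega(Tv,Tw)=b^{2}\omega(v,w)=a\,\omega(v,w),
\]
so $(1-a)\omega(v,w)=0$ and the hypothesis $a\neq 1$ gives $\omega(v,w)=0$; the argument for $V_{-}$ is identical. Since $V_{+}\oplus V_{-}$ has dimension $2n$ and isotropic subspaces of a symplectic space of dimension $2n$ have dimension at most $n$, both $V_{+}$ and $V_{-}$ must be Lagrangian.

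Now I invoke the standard fact that any pair of complementary Lagrangians in a symplectic space can be put into the model position: pick any basis $f_{1},\dots,f_{n}$ of $V_{+}$; non-degeneracy of $\omega$ between $V_{+}$ and $V_{-}$ (which follows from $V_{+}=V_{+}^{\perp}$ and $V_{+}\oplus V_{-}=V$) yields a dual basis $g_{1},\dots,g_{n}$ of $V_{-}$ with $\omega(f_{i},g_{j})=\delta_{ij}$, giving a symplectic basis in which $V_{+}$ and $V_{-}$ are the coordinate Lagrangians. In this basis $T$ is the block-diagonal matrix $\mathrm{diag}(bI_{n},-bI_{n})$.

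The contradiction now comes from testing this matrix against $J$. A direct $2\times 2$ block computation yields
\[
{}^{t}T\,J\,T=\begin{pmatrix}bI_{n} & 0\\ 0 & -bI_{n}\end{pmatrix}\begin{pmatrix}0 & I_{n}\\ -I_{n} & 0\end{pmatrix}\begin{pmatrix}bI_{n} & 0\\ 0 & -bI_{n}\end{pmatrix}=-b^{2}J=-aJ.
\]
The symplectic condition ${}^{t}T J T=J$ therefore forces $-a=1$, i.e.\ $a=-1$, which is excluded by hypothesis. Hence $S_{a}=\emptyset$. There is really no hard step here; the only subtlety to watch is the use of odd characteristic (to separate the eigenvalues $\pm b$) and the invertibility of $T$ (to rule out $a=0$), both of which are built into the hypotheses.
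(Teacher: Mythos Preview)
Your proof is correct and follows the same eigenspace-decomposition strategy as the paper's main argument. The detour through a symplectic basis adapted to $V_{+},V_{-}$ and the explicit matrix check of ${}^{t}TJT$ is unnecessary, however: the paper simply observes, coordinate-free, that for $x\in V_{+}$ and $y\in V_{-}$ one has $\omega(x,y)=\omega(Tx,Ty)=-a\,\omega(x,y)$, so the hypothesis $a\neq -1$ already kills the cross-pairing and forces $\omega\equiv 0$, a contradiction. (The paper also records a one-line alternative: eigenvalues of a symplectic matrix come in pairs $\lambda,\lambda^{-1}$, so $\{b,-b\}$ being closed under inversion forces $b^{2}\in\{1,-1\}$.)
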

 \begin{proof}
  Let $T \in S_a$ , then the eigenvalues of $T$ belongs to  the set $\pm b.$ Let $W_b, W_{-b}$ be the eigenspaces of $V.$

 The equality  $ \frac12 (bI-T) +\frac12 (bI +T)= b I$ implies that $V=W_b \oplus W_{-b}.$

 For $x, y \in W_b,$ we have $ \omega(x,y)=0$  ( $\omega(x,y)=\omega(Tx,Ty)=b^2 \omega(x,y),$ ) . Similarly,  for $x, y \in W_{-b}$ we have $ \omega(x,y)=0$. Therefore, both subspaces are isotropic.

 We now verify for $x, \in W_b, y \in W_{-b} $ that $ \omega(x,y)=0.$ In fact,  $ \omega(x,y)=\omega(Tx,Ty)=b (-b) \omega(x,y)=-a \omega(x,y).$ Since $a \not= -1$, we get $ \omega(x,y)=0.$

 Then, assuming $S_a$ is not empty,  unless $a \in \{1,-1\}$ we have $\omega$ equal to the null form, and the result follows.
 \smallskip

 Another proof follows along the following lines :

 For a symplectic matrix, if $\lambda $ is an eigenvalue, then $1/\lambda$ is also an eigenvalue.

 So if $b,-b$ are the unique eigenvalues, and $b \notin \{\pm 1, \pm i\}$  we must have $-b=1/b$ from which  $b^2=-1$ so $a=-1.$
\end{proof}

\subsubsection{The case $a=1$}

 Let $W$ be any subspace of $V$ such that $\omega$ restricted to $W$ is non degenerate, so $V=W \oplus W^\perp.$

 Define $T_W$ to be the linear operator equal to the identity in $W$ and equal to $-I$ in $W^\perp.$

It readily follows that $T_W \in Sp(n,F)$ and  $T_W$ is an involution.

\begin{prop} Any involution $T$ in $Sp(n,F)$ is equal to a $T_W$ for a convenient $W.$
\end{prop}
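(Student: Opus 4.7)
The plan is to take $W := V_1(T)$, the $+1$-eigenspace of $T$, and show that $\omega$ restricts non-degenerately to $W$, with $W^{\perp_\omega}$ equal to the $-1$-eigenspace $V_{-1}(T)$. Then by the very definition of $T_W$, the operator $T$ will act as the identity on $W$ and as $-I$ on $W^\perp$, giving $T=T_W$.

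First, since $T$ is an involution and $F$ has odd characteristic, the minimal polynomial of $T$ divides $(x-1)(x+1)$, which has distinct roots in $F$. Hence $T$ is diagonalizable over $F$ and $V = V_1(T) \oplus V_{-1}(T)$. Next, for $x \in V_1(T)$ and $y \in V_{-1}(T)$ the symplectic condition gives
\[
\omega(x,y) = \omega(Tx,Ty) = \omega(x,-y) = -\omega(x,y),
\]
so (again because $\operatorname{char} F \neq 2$) $\omega(x,y) = 0$. Thus $V_{-1}(T) \subseteq V_1(T)^{\perp_\omega}$, and by comparing dimensions (using that $\omega$ is non-degenerate on $V$) we get equality $V_{-1}(T) = V_1(T)^{\perp_\omega}$.

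It remains to check that $\omega$ restricts non-degenerately to $W := V_1(T)$. If $x \in W$ satisfies $\omega(x,y)=0$ for all $y \in W$, then combined with the vanishing already established on $V_{-1}(T) = W^{\perp_\omega}$, we get $\omega(x,v)=0$ for every $v \in V$, so $x = 0$ by non-degeneracy of $\omega$ on $V$. Hence $\omega|_W$ is non-degenerate, so $V = W \oplus W^{\perp_\omega}$ and the operator $T_W$ of the previous paragraph is well defined. Since $T$ acts as $+I$ on $W$ and as $-I$ on $W^{\perp_\omega} = V_{-1}(T)$, we obtain $T = T_W$.

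There is essentially no hard step in this argument; the only subtlety is ensuring the two ingredients that require $\operatorname{char} F$ odd, namely diagonalizability of $T$ and the cancellation $\omega(x,y) = -\omega(x,y) \Rightarrow \omega(x,y)=0$. Both are used above, and the hypothesis on the characteristic (already assumed in the subsection) makes them automatic.
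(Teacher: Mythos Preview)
Your proof is correct and follows essentially the same approach as the paper: decompose $V$ into the $\pm 1$-eigenspaces of $T$, use the symplectic condition to show these eigenspaces are mutually $\omega$-orthogonal, and conclude that $\omega$ is non-degenerate on each. Your write-up is in fact more careful than the paper's, which simply asserts ``It follows: $\omega$ restricted to any of the subspaces is non-degenerate'' without spelling out the dimension count and the radical argument you give.
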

\begin{proof}
 In fact,   the eigenvalues of $T$ belongs to  the set $\pm 1$ Let $W_1, W_{-1}$ be the eigenspaces of $V$
 the equality  $ \frac12 (I-T) +\frac12 (I +T)=  I$ implies that $V=W_1 \oplus W_{-1}.$

 For $x, \in W_1, y \in W_{-1} $ we have $ \omega(x,y)=0.$ In fact, $ \omega(x,y)=\omega(Tx,Ty)= 1(-1) \omega(x,y)=-1 \omega(x,y).$

 It follows: $\omega$ restricted to any of the subspaces in non degenerate. Hence, $T=T_{W_1}.$
 \end{proof}

\begin{cor} The orbits of $Sp(n,F)$ in $\mathcal C_{1}(n,F)$ are parameterized by  $k=1,2, \dots, 2n.$ Indeed, for each $k$ the set of involutions $T$ such that its $1-$eigenspace is of dimension $k, $ is an orbit for $Sp(n;F).$
\end{cor}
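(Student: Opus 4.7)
The plan rests on Proposition 8, which shows that every $T\in \mathcal{C}_1(n,F)$ equals $T_W$, where $W=\ker(T-I)$ is its $1$-eigenspace, $\omega|_W$ is non-degenerate, and $T_W$ acts as $+I$ on $W$ and $-I$ on $W^{\perp_\omega}$. This identifies $\mathcal{C}_1(n,F)$ with the collection of non-degenerate (``symplectic'') subspaces $W\subset V=F^{2n}$, and the identification is $Sp(n,F)$-equivariant: for $g\in Sp(n,F)$ one has $gT_W g^{-1}=T_{gW}$, because $g$ preserves $\omega$-orthogonality and hence sends $W^{\perp_\omega}$ to $(gW)^{\perp_\omega}$.

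With this reduction, proving the corollary amounts to classifying the orbits of $Sp(n,F)$ on symplectic subspaces of $V$. First I would note that $k:=\dim W$ is manifestly an invariant of the orbit, so distinct dimensions yield disjoint orbits; since $\omega|_W$ is non-degenerate alternating, $k$ must in fact be even, with $0\le k\le 2n$.

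For transitivity within each admissible $k$, I would invoke Witt's theorem for the symplectic group: any linear isometry between two non-degenerate subspaces $W,W'\subset V$ extends to an element of $Sp(n,F)$. Two symplectic subspaces of the same even dimension are always isometric, both being isomorphic to the standard symplectic space of that dimension; so Witt produces $g\in Sp(n,F)$ with $gW=W'$, and by the equivariance above $g T_W g^{-1}=T_{W'}$. Combined with the previous paragraph this gives exactly one orbit for each admissible value of $k$.

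The only real obstacle is notational: the statement's parameter range ``$k=1,2,\dots,2n$'' must be interpreted as running over the admissible dimensions, i.e.\ the even integers in $\{0,2,\dots,2n\}$, since odd $k$ correspond to empty sets (no alternating form of odd dimension can be non-degenerate). Once this is clarified, both separation of orbits (via $\dim W$) and transitivity on each dimension class (via Witt) are immediate, so no substantive computation remains.
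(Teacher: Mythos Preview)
Your argument is correct and is exactly the natural completion the paper has in mind: the corollary is stated without proof, as an immediate consequence of Proposition~8, and the transitivity step via Witt's theorem is the same tool the paper already invokes elsewhere (e.g.\ in the proof of Proposition~3). The equivariance $gT_Wg^{-1}=T_{gW}$ and the separation by $\dim W$ are straightforward, so nothing is missing on the substance.

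Your observation about the parameter range is also correct and worth recording: since $\omega|_W$ is non-degenerate alternating, $\dim W$ must be even, so only $k\in\{0,2,4,\dots,2n\}$ actually occur (the case $k=0$ corresponding to $T=-I$, and $k=2n$ to $T=I$). The paper's stated range ``$k=1,2,\dots,2n$'' is therefore imprecise; the number of orbits is $n+1$, not $2n$.
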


 \section{Acknowledgements}
 Part of the work was done during the meeting "Representation theory days in Patagonia" organized by University of Talca. The authors are grateful to the organizers of the workshop, Stephen Griffeth, Steen Ryom-Hansen, Jean F. van Diejen, Luc Lapointe, for their kind invitation to attend to such a nice meeting.

The authors want also to thank  Pierre Cartier for  illuminating discussions and suggestions related to this work.

\end{document}